\documentclass[12pt,a4paper,reqno]{amsart}
\usepackage{amsmath,amssymb,amsthm,mathtools}
\usepackage[utf8]{inputenc}
\usepackage[T1]{fontenc}
\usepackage{hyperref}
\usepackage{tikz-cd}
\usepackage[ margin=3.5cm ]{geometry}

\theoremstyle{plain}
\newtheorem{theorem}{Theorem}[section]
\newtheorem{lemma}[theorem]{Lemma}
\newtheorem{proposition}[theorem]{Proposition}
\newtheorem{corollary}[theorem]{Corollary}
\theoremstyle{definition}
\newtheorem{definition}[theorem]{Definition}
\newtheorem{remark}[theorem]{Remark}
\newtheorem{setup}[theorem]{Setup}

\theoremstyle{remark}
\newtheorem{example}[theorem]{Example}

\DeclareMathOperator{\modd}{mod}
\DeclareMathOperator{\Mod}{Mod}
\DeclareMathOperator{\Hom}{Hom}
\DeclareMathOperator{\End}{End}
\DeclareMathOperator{\add}{add}

\DeclareMathOperator{\op}{op}

\DeclareMathOperator{\pd}{pd}
\DeclareMathOperator{\Findim}{Findim}
\DeclareMathOperator{\findim}{findim}
\DeclareMathOperator{\dell}{dell}
\DeclareMathOperator{\subddell}{sub\text{-}ddell}
\DeclareMathOperator{\ddell}{ddell}
\DeclareMathOperator{\edell}{edell}
\DeclareMathOperator{\Dell}{Dell}
\DeclareMathOperator{\gddell}{gddell}

\newcommand{\D}{\mathcal{D}}
\newcommand{\Db}{\mathcal{D}^b}

\newcommand{\K}{\mathcal{K}}
\newcommand{\PR}{\mathcal{P}_R}
\newcommand{\PS}{\mathcal{P}_S}
\newcommand{\syz}{\Omega_{\mathcal{D}}}
\newcommand{\oemb}{\stackrel{\oplus}{\hookrightarrow}}

\title{Degree-shift derived invariance of derived delooping levels}

\author{Jiaqun Wei}
\address{School of Mathematical Sciences, Zhejiang Normal University, Jinhua 321004, Zhejiang, P. R. China}
\email{weijiaqun5479@zjnu.edu.cn}

\author{Kaili Wu}
\address{College of Science, Nanjing Forestry University, Nanjing 210037, Jiangsu, P. R. China}
\email{kailywu@163.com}

\author{Weiqing Cao}
\address{School of Mathematics and Statistics, Jiangsu Normal University, Xuzhou 221116, P.R. China}
\email{weiqingcao@jsnu.edu.cn}

\date{}

\begin{document}

\maketitle

\begin{abstract}
Gélinas introduced the delooping level of an Artin algebra as a homological invariant that bounds the big finitistic dimension of the opposite algebra. A natural question is whether such invariants are preserved under derived equivalences. Chen recently showed that the finiteness of the classical delooping level and its sub-derived variant is not derived-invariant, but the case of the finer derived delooping level of Guo and Igusa remained open. In this paper, we prove that the finiteness of the derived delooping level is \emph{degree-shift invariant} under derived equivalences: if two algebras are derived equivalent via a tilting complex of width $k_T$, then finiteness of the $(k+k_T)$-derived delooping level on one side implies finiteness of the $k$-derived delooping level on the other. Consequently, the finiteness of the derived \(\infty\)-delooping level is invariant under derived equivalences. Furthermore, we introduce the global derived delooping level and prove that, under a derived equivalence, its \(\infty\)-version changes by at most \(k_T\). In particular, its finiteness is a derived invariant.
\end{abstract}

\tableofcontents

\section{Introduction}

\subsection{The finitistic dimension and delooping levels}

The finitistic dimension conjectures, proposed by Bass \cite{Bass}, are central problems in the homological theory of Artin algebras. For an Artin algebra $\Lambda$, the \emph{big finitistic dimension} is
\[
\Findim(\Lambda) = \sup\{\pd(M) \mid M \in \Mod \Lambda, \pd(M) < \infty\},
\]
and the \emph{little finitistic dimension} is
\[
\findim(\Lambda) = \sup\{\pd(M) \mid M \in \modd \Lambda, \pd(M) < \infty\}.
\]
The finitistic dimension conjecture asserts that $\findim(\Lambda) < \infty$ for all Artin algebras $\Lambda$.

In an attempt to resolve this conjecture, Gélinas \cite{Gelinas} introduced the \emph{delooping level} $\dell(\Lambda)$, a syzygy-theoretic invariant that bounds the big finitistic dimension of the opposite algebra:
\begin{equation}\label{eq:gelinas-bound}
\Findim(\Lambda^{\op}) \leq \dell(\Lambda).
\end{equation}

Ringel \cite{RingelNakayama} proved that for Nakayama algebras, the finitistic 
dimension coincides with the delooping level. Subsequently, Sen \cite{Sen} gave 
another proof of this result using syzygy filtrations. Kershaw and Rickard \cite{KershawRickard} constructed the first example of a finite-dimensional algebra with infinite delooping level.

The delooping level was refined by Guo and Igusa \cite{GuoIgusa}, who introduced the $k$-\emph{delooping level} $k$-$\dell$, the \emph{effective delooping level} $\edell$, the \emph{sub-derived delooping level} $\subddell$, and the $k$-\emph{derived delooping level} $k$-$\ddell$. The $k$-derived delooping level of $\Lambda$ is $k$-$\ddell(\Lambda) = \sup\{k$-$\ddell(S) \mid S \text{ simple}\}$.
 Usually, one sets $\ddell = 1$-$\ddell$.
 
 These invariants give new bounds for the finitistic dimension and satisfy
\begin{equation}\label{eq:ddell-chain}
\Findim(\Lambda^{\op}) = \edell(\Lambda) \leq \ddell(\Lambda) \leq \dell(\Lambda).
\end{equation}

Subsequently, 
Guo \cite{GuoSerial} investigated the relationship between the delooping level and the finitistic dimension for serial path algebras, showing that for right serial algebras the right finitistic dimension equals the left delooping level.
The symmetry of the derived delooping level was investigated in \cite{SymDDell}.
Chen, Li, Zhang and Zhao \cite{ChenLiZhangZhao} studied the relation between $\tau$-tilting modules and 1-tilting modules, and used the delooping level to prove that a self-orthogonal $\tau$-tilting module is 1-tilting provided its endomorphism algebra has finite global delooping level.
Barrios, Lanzilotta and Mata \cite{BLM} introduced the global delooping level and studied its relation with the $\phi$-dimension.  Xu and Zhang \cite{XuZhang} generalized the delooping level to the setting of $\tau$-tilting modules, defining the depth and delooping level relative to a $\tau$-tilting module, and established bounds for the finitistic dimension of the endomorphism algebra.

\subsection{Derived invariance}

 Rickard \cite{Rickard} introduced tilting complexes, which characterize derived equivalences. Derived equivalences preserve many homological properties. Pan and Xi \cite{PanXi2009} proved that the finiteness 
of the finitistic dimension is preserved.  
Wei \cite{Wei} proved that syzygy-finite algebras, Igusa--Todorov algebras, and AC-algebras are preserved under derived equivalences. Fernandes, Lanzilotta and Hern\'andez \cite{MendozaSaenz} obtained that derived equivalences also preserve the finiteness of the $\phi$-dimension, among others.

It is therefore natural to ask whether delooping levels are preserved under derived equivalences. Xi and Zhang conjectured that the finiteness of $\dell$ is invariant under derived equivalences among Noetherian rings \cite{XiZhang}. However, Chen \cite{Chen} recently gave a counterexample with the help of the Kershaw--Rickard example \cite{KershawRickard}. Indeed, he constructs two derived equivalent finite-dimensional algebras $B$ and $C$ over a field, where $B$ is the Kershaw--Rickard algebra \cite{KershawRickard} with infinite delooping level, such that
\[
\dell(B) = \subddell(B) = \infty \quad \text{and} \quad \dell(C) = \subddell(C) = 0.
\]

\subsection{Our results}

The main question left open by Chen's counterexample is whether the finiteness of the derived delooping level is preserved under derived equivalences. 

To study the behavior of derived delooping levels, we first clarify the important role of the number $k$ by calling it the \emph{delooping degree}. With this notion, the statement that a module $M$ satisfies $k$-$\ddell(M)\le n$ means that $M$ admits a derived delooping level $n$ with degree $k$. 

Our main theorem can be formulated as follows.

\begin{theorem}\label{thm:main-intro}
The finiteness of derived delooping levels is degree-shift invariant under derived equivalence. More precisely, 
let $R$ and $S$ be derived equivalent Artin algebras, and fix an integer $k\ge 1$. If the derived equivalence is represented by a tilting complex of width $k_T$, then $(k+k_T)$-$\ddell (S)<\infty$ implies $k$-$\ddell (R)<\infty$.
\end{theorem}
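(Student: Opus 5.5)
The plan is to transport the delooping condition along the derived equivalence, working throughout in the bounded derived category modulo perfect complexes. In that quotient, $k$-$\ddell$ can be phrased through derived syzygies: the $n$-th syzygy of a module is, up to a perfect complex, its shift by $[-n]$. The condition $k$-$\ddell(M)\le n$ should then say that $\syz^{n}M$ is a direct summand, modulo perfect complexes, of $\syz^{n+k}$ of some module (or of a complex built from modules in the allowed way). I will first record this reformulation precisely, following Guo--Igusa.

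Fix a tilting complex $T$ of width $k_T$ and the induced triangle equivalence $F\colon \Db(\modd S)\to \Db(\modd R)$. By Rickard, $F$ and its quasi-inverse $G$ preserve perfect complexes, so they induce equivalences of the Verdier quotients by perfect complexes. These equivalences commute with shifts and with the derived syzygy operator $\syz$, which is just $[-1]$ in the quotient. After normalising $T$ to live in degrees $[-k_T,0]$, every module is sent by $G$ to a complex whose cohomology is concentrated in an interval of length $k_T$.

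The main step is as follows. Take a simple $R$-module $S_R$. The complex $G(S_R)$ has finite length cohomology in $k_T+1$ consecutive degrees. Using the canonical truncation triangles, $G(S_R)$ is an iterated extension of shifted stalk modules $H^i[-i]$, and each $H^i$ is in turn an iterated extension of simple $S$-modules. By hypothesis each simple $S$-module has $(k+k_T)$-$\ddell\le N$ for a uniform $N$, since there are finitely many simples. I then need two closure lemmas for the derived delooping condition:
\begin{enumerate}
\item It is closed under extensions, at the cost of enlarging $n$ but not decreasing the degree. This uses a horseshoe-type argument on the witnessing complexes.
\item A shift $[-i]$ with $0\le i\le k_T$ trades $i$ units of degree for a shift of level.
\end{enumerate}
Together these show that $G(S_R)$ satisfies the derived delooping condition with degree at least $(k+k_T)-k_T=k$ and some finite level $n'$. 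Applying $F$, which commutes with $\syz$ and preserves direct summands and perfect complexes, gives $k$-$\ddell(S_R)\le n'$ on the $R$ side. The remaining point in this transfer is that $F$ of the witnessing object need not be a module. So I either have to use the version of the definition that allows bounded complexes as witnesses, or replace a complex by a high syzygy of a module, which is possible after enlarging $n$ by $k_T$.

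I expect the main obstacle to be the bookkeeping of degrees in the shift/truncation step. One must ensure that the width $k_T$ is lost exactly once, not once per cohomology layer, when the layers $H^i[-i]$ for different $i$ are glued back together. My first attempt will be to realign all layers at the top degree: apply $\syz^{k_T}$ uniformly, so that each shifted layer $\syz^{k_T}(H^i[-i])$ is a nonnegative syzygy of a module. I will then verify that the extension-closure lemma preserves the common degree $k$. If that fails, I will fall back on a cruder estimate and check whether the loss still stays within $k_T$.
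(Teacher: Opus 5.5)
\textbf{Verdict: genuine gap.} The central reformulation is wrong as stated, and the step that carries the content, getting from the quotient back to modules, is never supplied. The approach looks repairable along the lines below, but the proposal does not do it.

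\textbf{The reformulation.} In $\mathcal{D}_{\mathrm{sg}}(R)=\Db(R)/\K^b(\PR)$ one has $\syz^n M\cong M[-n]$. So ``$\syz^n M$ is a summand of $\syz^{n+k}N$ modulo perfect complexes'' just says that $M$ is a summand of $N[-k]$. This does not depend on $n$, and it only records finiteness of $k$-$\dell(M)$, not the $k$-$\ddell$ condition. Consequently no level $n'$ can come out of an argument run in the quotient, only finiteness (which is all the theorem needs). Your hedge ``or of a complex built from modules in the allowed way'' is exactly the part that has to be made precise.

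The correct shadow of a $k$-$\ddell$ resolution $0\to C_n\to\cdots\to C_0\to M\to 0$ is as follows. One has $C_i\oemb N_i[-(i+k)]$ iff $C_i[i]\oemb N_i[-k]$, and $M\in C_0*C_1[1]*\cdots*C_n[n]$. So $M$ lies in the extension closure $\langle\mathcal A_k\rangle$ of $\mathcal A_k=\{\text{summands of }N[-k] : N\in\modd R\}$. If your condition is taken to be membership in $\langle\mathcal A_k\rangle$, your two closure lemmas become formal, using $\mathcal A_k[j]=\mathcal A_{k-j}$ and $\mathcal A_{K}\subseteq\mathcal A_{K'}$ for $K\ge K'$.

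\textbf{The missing converse.} The step that carries the content, which you never state, is: a module lying in $\langle\mathcal A_k\rangle$ has finite $k$-$\ddell$. Proving it requires going back to $\modd R$:
\begin{enumerate}
\item realize isomorphisms and triangles of the quotient by stable isomorphisms and short exact sequences of high syzygies $\syz^m$;
\item apply Guo--Igusa's extension inequality in degree $m+k$;
\item de-syzygize via Lemma~\ref{lem:syz-k-ddell}(2), which is where $k\ge 1$ is used.
\end{enumerate}
Without this converse, nothing gets you from a statement in the quotient back to $k$-$\ddell(S_R)<\infty$.

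\textbf{The ``remaining point'' is misdiagnosed.} With your $F\colon\Db(S)\to\Db(R)$ one has $F(\modd S)\subseteq\D_{[-k_T,0]}(R)$. Hence $F(N)\cong\syz^0F(N)$ modulo perfect complexes, which is a module, so $F(\mathcal A_k^S)\subseteq\mathcal A_k^R$ at no cost. The whole loss of $k_T$ comes from $G(\modd R)\subseteq\D_{[0,k_T]}(S)$, i.e.\ $G(M)\cong(\syz^{k_T}G(M))[k_T]$. Likewise, the per-layer bookkeeping you single out as the main obstacle is harmless for a finiteness statement, since every layer already lands in $\langle\mathcal A_k\rangle$.

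\textbf{Comparison with the paper.} The paper never passes to the quotient. In its notation (its $\mathcal F$ is your $G$), the operation $\syz^0\circ\mathcal G$:
\begin{enumerate}
\item sends $\modd S$ to $\modd R$;
\item takes short exact sequences to short exact sequences;
\item commutes with $\Omega$ (Corollary~\ref{cor:iterative-syzygy});
\item satisfies $\syz^0\mathcal G(\syz^{k_T}\mathcal F(M))\simeq\Omega^{k_T}M$ (Corollary~\ref{cor:key-syzygy}).
\end{enumerate}
So a $(k+k_T)$-$\ddell$ resolution of the $S$-module $\syz^{k_T}\mathcal F(M)$ is carried termwise to one of $\Omega^{k_T}M$ of the same length, and Lemma~\ref{lem:syz-k-ddell}(2) finishes. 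Staying in module categories is also what gives the explicit length bounds used later for $\gddell$.
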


Following \cite{GuoIgusa}, we say that a module is infinitely deloopable if all its delooping levels (for all degrees) are $0$. Similarly, $\infty$-$\ddell$ means $k$-$\ddell$ for all $k\ge 1$. We then have the following corollary, which shows that the finiteness of the $\infty$-derived delooping level is indeed derived invariant.

\begin{corollary} \label{cor:main-intro}
Let $R$ and $S$ be derived equivalent Artin algebras. Then $\infty$-$\ddell (S)<\infty$ if and only if $\infty$-$\ddell (R)<\infty$.
\end{corollary}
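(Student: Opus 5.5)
We derive Corollary~\ref{cor:main-intro} directly from Theorem~\ref{thm:main-intro}, applied in both directions; the only extra ingredient is the symmetry of derived equivalence.

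First we fix the meaning of $\infty$-$\ddell(\Lambda)<\infty$. We read $\infty$-$\ddell(\Lambda)<\infty$ as saying that $k$-$\ddell(\Lambda)<\infty$ for every $k\ge 1$. If instead one takes $\infty$-$\ddell(\Lambda)=\sup_k k$-$\ddell(\Lambda)$, then the finiteness statement for the supremum requires a uniform bound. In that case the argument below still goes through, provided Theorem~\ref{thm:main-intro} comes with an explicit bound of the form
\[
k\text{-}\ddell(R)\le (k+k_T)\text{-}\ddell(S)+c,
\]
where $c$ depends only on the tilting complex. We expect the proof of the theorem to give such a bound, with $c$ controlled by $k_T$.

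Now suppose $\infty$-$\ddell(S)<\infty$, and let $T$ be a tilting complex over $R$ with $\End(T)\cong S$, of width $k_T$. Fix $k\ge 1$. Since $k+k_T\ge 1$, the hypothesis gives $(k+k_T)$-$\ddell(S)<\infty$. Theorem~\ref{thm:main-intro} then yields $k$-$\ddell(R)<\infty$. Because $k$ was arbitrary, we conclude $\infty$-$\ddell(R)<\infty$. Under the supremum reading, the uniform bound gives $\sup_k k$-$\ddell(R)\le \sup_k (k+k_T)$-$\ddell(S)+c<\infty$, since the indices $k+k_T$ form a subset of all degrees.

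For the converse, we use Rickard's theorem: a derived equivalence $D^b(\modd R)\simeq D^b(\modd S)$ has a quasi-inverse, and this quasi-inverse is again induced by a tilting complex $T'$ over $S$ with $\End(T')\cong R$, of some finite width $k_{T'}$. The roles of $R$ and $S$ are therefore symmetric. Applying the previous paragraph with $R$ and $S$ interchanged and $k_T$ replaced by $k_{T'}$ shows that $\infty$-$\ddell(R)<\infty$ implies $\infty$-$\ddell(S)<\infty$.

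The main point to check is that the inverse equivalence really is given by a tilting complex of finite width. This holds because $T'$ is a bounded complex of finitely generated projective $S$-modules, namely the image of $R$ under the equivalence. Its width need not equal $k_T$, but any finite width is enough here. There is no real obstacle beyond that, since all the degree-shifting is absorbed by quantifying over every $k$.
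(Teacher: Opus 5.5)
Your argument is correct and is the paper's own: under the reading the paper adopts ($\infty$-$\ddell$ finite meaning $k$-$\ddell$ finite for every $k\ge 1$), you apply Theorem~\ref{thm:main-intro} at degree $k+k_T$ for each $k$ and then swap $R$ and $S$. This is exactly Theorem~\ref{thm:main}(1)--(2) combined with the symmetry of the derived equivalence, and there the reverse direction even uses the same $k_T$. Your hedge about the supremum reading is not needed; if you did want it, a uniform bound comes from the additive estimate for $\gddell$ in Proposition~\ref{pro:gddell-finiteness} combined with the Loewy-length bound of Lemma~\ref{lem:global-ddell}, not from an additive bound on $k$-$\ddell$ itself.
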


Define the \emph{global derived delooping level with degree
$k$} of an Artin algebra $R$ as
\[
k\text{-}\gddell(R)
:=\sup\{k\text{-}\ddell(M)\mid M\in\modd R\}.
\]
This is the global version of Guo--Igusa's derived delooping level.
A similar global version of Gélinas's delooping level, $\Dell(R):=\sup\{\dell(M)\mid M\in\modd R\},$ 
was introduced by Barrios--Lanzilotta--Mata \cite{BLM}.  Thus
$k\text{-}\gddell$ refines their $\Dell$ by replacing $\dell$ with
the sharper invariant $\ddell$ and keeping track of the delooping
degree $k$.

We prove that $k\text{-}\ddell(R)<\infty$ if and only if  $k\text{-}\gddell(R)<\infty$ (see  Lemma~\ref{lem:global-ddell} ). Combining this local-to-global equivalence with
Theorem~\ref{thm:main-intro}, we obtain the following results which is similar to the well-known case of global dimension.

\begin{corollary}\label{cor:gddell-intro}
let $R$ and $S$ be derived equivalent Artin algebras, and fix an integer $k\ge 1$. If the derived equivalence is represented by a tilting complex of width $k_T$, then 
\[
\left|
\infty\text{-}\gddell(R)
-
\infty\text{-}\gddell(S)
\right|
\leq k_T.
\]
\end{corollary}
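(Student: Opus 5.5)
The plan is to extract from the proof of Theorem~\ref{thm:main-intro} a \emph{quantitative} transfer estimate at the level of individual modules, and then pass to suprema. Fix a derived equivalence $F\colon \Db(\modd R)\to \Db(\modd S)$ given by a tilting complex $T$ of width $k_T$. After a shift, I normalise so that every $M\in\modd R$ is sent to a complex $F(M)$ whose cohomology lies in degrees $[0,k_T]$ (or $[-k_T,0]$). The key intermediate claim I would isolate is the following. For every $M\in\modd R$ and every $k\ge 1$,
\[
k\text{-}\ddell(M)\;\le\; k_T+\sup\{(k+k_T)\text{-}\ddell(N)\mid N \text{ a cohomology module, or a truncation-related module, of } F(M)\}.
\]
This is exactly what the argument behind Theorem~\ref{thm:main-intro} should produce when it is run with explicit bounds. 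Every $N$ appearing on the right is an $S$-module, so the right-hand side is at most $k_T+(k+k_T)\text{-}\gddell(S)$. The argument has three steps. First, filter $F(M)$ by its (at most $k_T+1$) cohomology modules. Second, use the $S$-side derived delooping data of degree $k+k_T$ for each of them. Third, transport back along $F^{-1}$, which consumes $k_T$ units of degree and adds at most $k_T$ to the level.

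Taking the supremum over $k$ and then over $M$, with the convention that $\infty$-$\ddell(M)=\sup_k k\text{-}\ddell(M)$, gives
\[
\infty\text{-}\gddell(R)\le \sup_k\bigl((k+k_T)\text{-}\gddell(S)\bigr)+k_T\le \infty\text{-}\gddell(S)+k_T .
\]
The shift of the degree by $k_T$ is harmless here, since we take the supremum over all degrees anyway. For the reverse inequality I would apply the same estimate to the quasi-inverse equivalence. Its tilting complex is $\mathbf{R}\Hom_S(T,S)$ viewed as an $R$-complex (equivalently $F^{-1}(S)$), and a standard argument shows it has the same width $k_T$: the two-sided tilting complex has cohomological amplitude symmetric under duality. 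This yields $\infty\text{-}\gddell(S)\le\infty\text{-}\gddell(R)+k_T$, and combining the two inequalities proves the corollary. The argument also covers the case where both sides are infinite, and in particular it shows that finiteness is a derived invariant, consistent with Lemma~\ref{lem:global-ddell} and Corollary~\ref{cor:main-intro}.

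I expect the main obstacle to be the quantitative bookkeeping in the first step. Theorem~\ref{thm:main-intro} is stated only as an implication between finiteness statements, so I have to check that its proof really gives the additive bound ``level $+k_T$'' uniformly in $M$, rather than merely some finite bound depending on $M$. My first approach would be to track the length of the derived syzygy and cosyzygy constructions through the truncation triangles $\tau^{\le i}F(M)\to F(M)\to\tau^{>i}F(M)$. Each triangle should shift the relevant index by at most one, and there are at most $k_T$ of them. If the uniform bound turns out to be harder to obtain, a fallback is to combine the local-to-global equivalence of Lemma~\ref{lem:global-ddell} with the fact that $k\text{-}\ddell$ is controlled by the simples. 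That would reduce the problem to finitely many modules, although it would give a weaker constant than $k_T$.
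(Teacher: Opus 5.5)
\textbf{There is a genuine gap.} Your outer frame agrees with the paper. You prove a module-level estimate $k\text{-}\ddell(M)\le (k+k_T)\text{-}\gddell(S)+k_T$ for every $M\in\modd R$ (this is Proposition~\ref{pro:gddell-finiteness}), take suprema over $M$ and $k$, and get the reverse inequality from the fact that $\mathcal F(R)\in\K_{[0,k_T]}(\PS)$ has the same width (Lemma~\ref{lem:FR}).

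But that module-level estimate is the entire content of the corollary, and you do not establish it. You only assert that the proof of Theorem~\ref{thm:main-intro} ``should'' give it, and the mechanism you sketch does not. Filtering $\mathcal F(M)$ by its $k_T+1$ cohomology modules and gluing along truncation triangles needs the extension inequality used in Lemma~\ref{lem:global-ddell}. That inequality \emph{adds} the levels of the pieces, not just $1$ per triangle. So even with a way to move each piece to the $R$-side, you would get something like $(k_T+1)g+2k_T$ with $g=(k+k_T)\text{-}\gddell(S)$, not $g+k_T$.

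Your sketch also gives no way to move a piece to the $R$-side. $\mathcal G(H_i(\mathcal F(M))[i])$ is a complex, not a module, and nothing you propose turns a $\ddell$ resolution over $S$ into one over $R$. Your fallback through Lemma~\ref{lem:global-ddell} only gives bounds of the form $\ell(m+1)-1$, so it cannot yield the constant $k_T$ either.

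The missing idea is to transport a \emph{single} module instead of the cohomology:
\begin{enumerate}
\item Since $\mathcal F(M)\in\D_{[0,k_T]}(S)$, the syzygy $M_S=\syz^{k_T}(\mathcal F(M))$ is an $S$-module with $(k+k_T)\text{-}\ddell(M_S)\le g$.
\item The functor $\syz^0\circ\mathcal G\colon\modd S\to\modd R$ sends short exact sequences to short exact sequences. It also commutes with syzygies, $\syz^0\mathcal G(\Omega^iX)\simeq\Omega^i\syz^0\mathcal G(X)$ (Corollary~\ref{cor:iterative-syzygy}). Hence it carries a $(k+k_T)$-$\ddell$ resolution of length $n$ to one of the same length and the same degree (Lemma~\ref{lem:transfer}).
\item Corollary~\ref{cor:key-syzygy} identifies $\syz^0\mathcal G(M_S)\simeq\Omega^{k_T}(M)$.
\item Only then does Lemma~\ref{lem:syz-k-ddell}(2), applied with $n=k_T$, turn this into a $k$-$\ddell$ resolution of $M$ of length at most $n+k_T$.
\end{enumerate}
So the transfer itself is lossless. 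The loss of $k_T$ in degree and the gain of $k_T$ in level both come from desyzygizing $\Omega^{k_T}M$, not from ``transporting back along $\mathcal F^{-1}$''. Because only one module is moved, the bound is $g+k_T$ rather than a sum over pieces.
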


\subsection{Structure of the paper}

The paper is organized as follows. In Section~\ref{sec:prelim}, we recall the necessary background on derived categories and syzygy complexes. In Section~\ref{sec:delooping}, we review classical, and derived delooping levels, recall the non-invariance of $\dell$ and $\subddell$ under derived equivalences, and introduce delooping degrees and derived delooping resolutions. Section~\ref{sec:invariance} contains the main results. Finally, Section~\ref{sec:examples} presents classes of algebras with finite derived $\infty$-delooping level and discusses exact values and applications under derived equivalences.

\section{Notation and preliminaries}\label{sec:prelim}

\subsection{Conventions and derived equivalences}

Throughout this paper, $R$ and $S$ denote Artin algebras over a commutative Artinian ring. Complexes are chain complexes. We always assume modules are finitely generated and right modules unless otherwise stated.

\begin{itemize}
\item $\modd R$: the category of (finitely generated right) $R$-modules.
\item $\PR$: the subcategory of projective modules in $\modd R$.
\item $\Db(R)$: the bounded derived category.
\item $\D_{[a,c]}(R)$: the full subcategory of $\Db(R)$ consisting of complexes with homology concentrated in degrees $a$ to $c$.
\item $\K^{-,b}(\PR)$: the homotopy category of right-bounded complexes of projectives with bounded homology.
\item $\K_{[a,b]}(\PR)$: complexes of projectives concentrated in degrees $a$ to $b$.
\item $[n]$: the shift functor, $(X[n])_i = X_{i-n}$ and $d_{X[n]} = (-1)^n d_X$.
\item $\sigma_I(M)$: the brutal truncation of $M \in \K^{-,b}(\PR)$ by setting $\sigma_I(M)_i = M_i$ for $i\in I$ and $0$ otherwise.
\item $\syz^n(M)$: the $n$-th syzygy complex of $M\in \Db(R)$ (Definition \ref{def:syzygy}).
\end{itemize}

Two algebras $R$ and $S$ are \emph{derived equivalent} if $\Db(R)$ and $\Db(S)$ are equivalent as triangulated categories. By Rickard's theorem \cite{Rickard}, this is equivalent to the existence of a tilting complex $T \in \K^b(\PR)$ with $\End_{\D(R)}(T) \cong S$ and $\add T$ generating $\K^b(\PR)$.

\subsection{Syzygy complexes}

We recall the definition and basic properties of syzygy complexes in derived categories, following Avramov--Iyengar \cite{AI} and Wei \cite{Wei}.

\begin{definition}\label{def:syzygy}
Let $M \in \Db(R)$ and $n \in \mathbb{Z}$. Let $P_M$ be a projective resolution of $M$. The \emph{$n$-th syzygy complex} of $M$ is
\[
\syz^n(M) := (\sigma_{\geq n}(P_M))[-n] \in \Db(R).
\]
\end{definition}

\begin{remark}\label{rem:syzygy-well-defined}
The syzygy complex depends a priori on the choice of projective resolution. However, by \cite[Proposition 3.5]{Wei}, any two choices yield projectively equivalent complexes, i.e., they are isomorphic up to direct summands of projective modules. \emph{When we say two syzygies are isomorphic, we mean this in the above sense}.
\end{remark}

\begin{remark}\label{rem:classical-syzygy}
When $M$ is an $R$-module (viewed as a complex concentrated in degree $0$) and $n \ge 0$, the $n$-th syzygy complex $\syz^n(M)$ coincides with the usual $n$-th syzygy module in $\modd R$. In particular, $\syz^0(M) \simeq M$.
\end{remark}

The following lemma collects the basic properties we need.

\begin{lemma}\label{lem:syzygy-basic}
Let $M, N \in \Db(R)$ and $n, m, a, b, j$ be integers. Then:
\begin{enumerate}
\item $\syz^n(M) \in \D_{[0,\infty)}(R) \cap \Db(R)$ and $\Hom_{\D(R)}(Q, \syz^n(M)[j]) = 0$ for any projective module $Q$ and $j > 0$.

\item If $M \in \D_{[a,b]}(R)$ for $a \leq b$, then:
\[
\syz^n(M) \in 
\begin{cases}
\D_{[0,0]}(R) = \modd R, & n \geq b, \\
\D_{[0,b-n]}(R), & a \leq n \leq b, \\
\D_{[a-n,b-n]}(R), & n \leq a.
\end{cases}
\]
In particular, $\syz^b(M) \in \modd R$ and $M \simeq \syz^a(M)[a]$.

\item $\syz^{n+m}(M[m]) \simeq \syz^n(M)$.

\item $\syz^{n+m}(M) \simeq \syz^m(\syz^n(M))$ for $m \geq 0$.

\item $\syz^n(M) \oplus Q$ is also an $n$-th syzygy of $M$ for any projective $Q$.

\item $M \in \K^b(\PR)$ iff any (equivalently, some) syzygy of $M$ lies in $\K^b(\PR)$.

\item $\syz^n(M \oplus N) \simeq \syz^n(M) \oplus \syz^n(N)$.
\end{enumerate}
\end{lemma}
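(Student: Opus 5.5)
We work throughout with the explicit model of Definition~\ref{def:syzygy}. Fix a projective resolution $P_M\in\K^{-,b}(\PR)$ of $M$, i.e.\ a right-bounded complex of finitely generated projectives with $P_M\simeq M$ in $\Db(R)$. Each item is then read off from properties of brutal truncations and shifts. The independence of choices from Remark~\ref{rem:syzygy-well-defined} lets us pick convenient resolutions, for instance minimal ones or direct sums.

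For (1), $\sigma_{\ge n}(P_M)[-n]$ has components only in degrees $\ge 0$, so its homology lies in degrees $\ge 0$. Since $P_M$ has bounded homology and the truncation alters homology only in degree $n$, the homology is also bounded.

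The vanishing $\Hom(Q,\syz^n(M)[j])=0$ for $j>0$ is computed in the homotopy category, because $Q$ is projective and the target is a right-bounded complex of projectives. A chain map $Q\to X[j]$ with $X=\sigma_{\ge n}(P_M)[-n]$ is a map into the degree $0$ component of $X[j]$, which is $X_{-j}$. This component is $0$ for $j>0$.

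Item (2) is a homology computation. The truncation $\sigma_{\ge n}P_M$ agrees with $P_M$ in homology in degrees $>n$, has homology $\operatorname{coker}(P_{n+1}\to P_n)$ in degree $n$, and vanishes below $n$. If $n\ge b$, then $P_M$ is exact in degrees $\ge n+1$, so $\sigma_{\ge n}P_M$ is a projective resolution of that cokernel and hence is quasi-isomorphic to a module in degree $n$. After the shift this module sits in degree $0$. The other two cases are similar, using that for $n\le a$ the truncation does not change homology at all.

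This last fact gives $\syz^a(M)[a]\simeq M$. For that identity we choose $P_M$ with $P_i=0$ for $i<a$, which is possible since $M\in\D_{\ge a}$ (the standard construction via a quasi-isomorphic truncation). This choice is the only point that needs care, and it is justified by the projective-equivalence convention of Remark~\ref{rem:syzygy-well-defined}.

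Item (3) holds because $P_M[m]$ is a resolution of $M[m]$ and $\sigma_{\ge n+m}(P_M[m])=(\sigma_{\ge n}P_M)[m]$. Shifting by $-(n+m)$ gives the claim; the sign conventions on differentials do not affect isomorphism classes.

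For (4), write $X=\syz^n(M)$. By (1), $X$ is a right-bounded complex of projectives concentrated in degrees $\ge 0$, so it is its own projective resolution. Hence $\syz^m(X)=\sigma_{\ge m}(X)[-m]=\sigma_{\ge n+m}(P_M)[-n-m]$, using $m\ge 0$.

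Item (5) follows by adding the contractible complex $Q\xrightarrow{=}Q$ in degrees $n,n-1$ of $P_M$. This yields another resolution whose truncation gains the summand $Q$ in degree $n$, which becomes $Q$ in degree $0$ after the shift.

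Item (7) uses the resolution $P_M\oplus P_N$ together with the fact that truncation commutes with direct sums.

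For (6), a truncation of $P_M$ differs from $P_M$ by the bounded complex $\sigma_{<n}P_M$, via the triangle $\sigma_{\ge n}P_M\to P_M\to\sigma_{<n}P_M$ (the lower part is bounded because $P_M$ is right-bounded). Since $\K^b(\PR)$ is a thick subcategory, the second object lies in $\K^b(\PR)$ if and only if the first does. Projective equivalence preserves membership in $\K^b(\PR)$ by thickness, so "any" and "some" agree.

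The main obstacle is keeping the well-definedness conventions consistent, particularly in (2) and (6), where isomorphisms hold only up to projective summands.
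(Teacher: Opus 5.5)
Your proof is correct and follows the same route as the paper. The paper defers to Wei's Lemma 3.3 and sketches only (2) and (4), using exactly these brutal-truncation computations; your versions are the accurate ones: the degree-$n$ homology of $\sigma_{\ge n}P_M$ is $\operatorname{coker}(P_{n+1}\to P_n)$ rather than $H_n(P_M)$, and the identity needed in (4) is $\sigma_{\ge m}\bigl(\sigma_{\ge n}(P_M)[-n]\bigr)[-m]=\sigma_{\ge n+m}(P_M)[-n-m]$. One harmless slip: with chain-complex conventions $\sigma_{\ge n}P_M$ is a quotient complex of $P_M$, not a subcomplex, so the triangle in (6) should read $\sigma_{<n}P_M\to P_M\to\sigma_{\ge n}P_M\to$; this does not affect your two-out-of-three argument.
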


\begin{proof}
These are standard; see \cite[Lemma 3.3]{Wei}. For completeness, we note that (2) follows from the definition of brutal truncation and the fact that $H_i(\sigma_{\geq n}P) = H_i(P)$ for $i \ge n$ and $0$ otherwise. Property (4) follows from the observation that $\sigma_{\geq n+m}(P)[-(n+m)] \simeq \sigma_{\geq m}(\sigma_{\geq n}P)[-m]$, which is a standard truncation identity.
\end{proof}

\begin{lemma}\label{lem:syzygy-triangle}
Let $M \in \Db(R)$ and $n \geq m$ be integers. Then there exists a triangle
\[
(\syz^{n+1}(M))[n-m] \longrightarrow B \longrightarrow \syz^m(M) \longrightarrow,
\]
where $B \in \K_{[0,n-m]}(\PR)$. In particular, for any integer $n$, there is a triangle
\[
\syz^{n+1}(M) \longrightarrow Q \longrightarrow \syz^n(M) \longrightarrow,
\]
with $Q$ a projective $R$-module.
\end{lemma}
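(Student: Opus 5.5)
Take a projective resolution $P = P_M \to M$ in $\K^{-,b}(\PR)$ and compare the two brutal truncations $\sigma_{\ge n+1}(P)$ and $\sigma_{\ge m}(P)$. Since $n \ge m$, there is a degreewise split short exact sequence of complexes
\[
0 \to \sigma_{\ge n+1}(P) \to \sigma_{\ge m}(P) \to \sigma_{[m,n]}(P) \to 0 .
\]
Here $\sigma_{[m,n]}(P)$ is a bounded complex of finitely generated projectives concentrated in degrees $m$ through $n$. Because the sequence is degreewise split, it yields a triangle in $\K^{-,b}(\PR)$, and hence in $\Db(R)$. Read in the opposite direction, it gives
\[
\sigma_{[m,n]}(P)[-1] \to \sigma_{\ge n+1}(P) \to \sigma_{\ge m}(P) \to \sigma_{[m,n]}(P),
\]
and rotating gives $\sigma_{\ge n+1}(P) \to \sigma_{\ge m}(P) \to \sigma_{[m,n]}(P) \to$. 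That orientation is not the one required, so the plan is to rotate so that $\sigma_{[m,n]}(P)$ sits in the middle.

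To get the middle term, rotate once more:
\[
\sigma_{\ge m}(P) \to \sigma_{[m,n]}(P) \to \sigma_{\ge n+1}(P)[1] \to \sigma_{\ge m}(P)[1].
\]
This still has the wrong shape. The cleaner route is the other degreewise split sequence, with the truncations taken in the opposite order:
\[
0 \to \sigma_{[m,n]}(P) \to \sigma_{\ge m}(P)/\!\!\sim \ ?
\]
Checking which is the subcomplex: in chain complexes with differential lowering degree, $\sigma_{\le n}$ is a quotient and $\sigma_{\ge n+1}$ is a subcomplex. So the split sequence $0\to\sigma_{\ge n+1}P\to\sigma_{\ge m}P\to\sigma_{[m,n]}P\to0$ gives the triangle $\sigma_{[m,n]}P[-1]\to \sigma_{\ge n+1}P\to\sigma_{\ge m}P\to$. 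Rotating, we obtain
\[
\sigma_{\ge n+1}P \to \sigma_{\ge m}P \to \sigma_{[m,n]}P \to \sigma_{\ge n+1}P[1].
\]
Rotating once more gives $\sigma_{\ge m}P\to\sigma_{[m,n]}P\to\sigma_{\ge n+1}P[1]\to$. Shifting this by $[-m]$ and using Definition~\ref{def:syzygy}, we have $\sigma_{\ge m}P[-m]=\syz^m(M)$ and $\sigma_{\ge n+1}P[1-m] = \syz^{n+1}(M)[n+1-m]$. So the triangle reads
\[
\syz^m(M)\to B'\to \syz^{n+1}(M)[n-m+1]\to,
\]
with $B'=\sigma_{[m,n]}P[-m]\in\K_{[0,n-m]}(\PR)$. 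Rotating backwards once then gives
\[
\syz^{n+1}(M)[n-m]\to \syz^m(M)\to B'\to,
\]
which puts $\syz^m(M)$ in the middle. That is also not the requested form.

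To get $B$ in the middle with $\syz^m(M)$ as the third term, I will instead treat the shifted syzygy as a subcomplex placed to the left via the mapping cone. Let $f:\sigma_{[m,n]}P\to\sigma_{\ge n+1}P[1]$ be the connecting map. Its cone is isomorphic to $\sigma_{\ge m}P[1]$. Alternatively, write $\sigma_{\ge m}P$ as the cone of $\sigma_{[m,n]}P[-1]\to\sigma_{\ge n+1}P$. The requested triangle $X[n-m]\to B\to \syz^m\to$ says that $\syz^m(M)$ is the cone of a map from $\syz^{n+1}(M)[n-m]$ into a bounded projective complex. This is plausible only if the sign and shift conventions make $\syz^{n+1}(M)[n-m]$ live in degrees $\ge n-m+\dots$, which is the main obstacle: reconciling the paper's shift convention $(X[n])_i=X_{i-n}$ with which truncation is the subcomplex. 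With that convention, $\sigma_{\ge n+1}P[-m]$ is exactly $\syz^{n+1}(M)[n+1-m]$, and the triangle from the split sequence is
\[
\syz^{n+1}(M)[n+1-m]\to\syz^m(M)\to B'\to .
\]
I would then verify that in the paper's indexing the subcomplex sits in degrees $\le$ rather than $\ge$. If so, the split sequence is $0\to\sigma_{[m,n]}P\to\sigma_{\ge m}P\to\sigma_{\ge n+1}P\to0$, with the quotient map given by the differential $P_{n+1}\to P_n$ being killed. That yields exactly $\syz^{n+1}(M)[n+1-m][-1]=\syz^{n+1}(M)[n-m]\to B\to\syz^m(M)\to$ after rotation, with $B=\sigma_{[m,n]}P[-m]\in\K_{[0,n-m]}(\PR)$. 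I will adopt whichever convention makes $\sigma_{[m,n]}$ a subcomplex, and justify the triangle by the degreewise split sequence.

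The special case follows by taking $m=n$. Then $B=P_n$ is concentrated in degree $0$, so it is a projective module $Q$, giving $\syz^{n+1}(M)\to Q\to\syz^n(M)\to$. Finally, I will remark that changing the resolution alters the syzygies only up to projective summands (Remark~\ref{rem:syzygy-well-defined}). This can be absorbed into $B$, so the statement holds for any choice of syzygies.
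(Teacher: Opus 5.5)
Your final argument is correct and is the same approach as the paper, which cites Wei's Lemma~3.4 and sketches it via a degreewise split sequence of brutal truncations. Concretely, the degreewise split sequence $0\to\sigma_{[m,n]}P\to\sigma_{\ge m}P\to\sigma_{\ge n+1}P\to0$ gives the triangle $\sigma_{\ge n+1}P[-1]\to\sigma_{[m,n]}P\to\sigma_{\ge m}P\to$. Applying $[-m]$ yields $\syz^{n+1}(M)[n-m]\to B\to\syz^m(M)\to$ with $B=\sigma_{[m,n]}P[-m]\in\K_{[0,n-m]}(\PR)$, and the case $m=n$ gives $B=P_n$ in degree $0$. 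However, the proposal never actually settles the one point of the lemma with real content, namely which truncation is the subcomplex, and along the way it asserts the wrong answer.

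With the paper's conventions (chain complexes, $d_i\colon P_i\to P_{i-1}$, $(X[n])_i=X_{i-n}$) there is nothing to choose. The degreewise inclusion $\sigma_{\ge n+1}P\to\sigma_{\ge m}P$ is not a chain map. In degree $n+1$ the differential of $\sigma_{\ge n+1}P$ is zero, whereas that of $\sigma_{\ge m}P$ is $d_{n+1}\colon P_{n+1}\to P_n$, which is nonzero in general. So $\sigma_{\ge n+1}P$ is a quotient. On the other hand, $\sigma_{[m,n]}P$ is a subcomplex, because the differential of $\sigma_{\ge m}P$ leaving degree $m$ is zero.

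Your sentence ``$\sigma_{\le n}$ is a quotient and $\sigma_{\ge n+1}$ is a subcomplex'' is therefore false and should be deleted. Likewise, ``adopt whichever convention makes $\sigma_{[m,n]}$ a subcomplex'' should be replaced by the one-line check above. The orientation is exactly what determines the shift on $\syz^{n+1}(M)$ and which object sits in the middle, so it cannot be left as a choice.

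Also remove the abandoned computations, which contain a shift error: $\sigma_{\ge n+1}P[1-m]=\syz^{n+1}(M)[n+2-m]$, not $\syz^{n+1}(M)[n+1-m]$.

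Incidentally, the paper's own sketch writes $0\to\sigma_{\ge n+1}P\to\sigma_{\ge n}P\to P_n[-n]\to0$. Under its stated conventions this has the same reversed inclusion, and it also places $P_n$ in degree $-n$. Your corrected orientation, $0\to\sigma_{[n,n]}P\to\sigma_{\ge n}P\to\sigma_{\ge n+1}P\to0$ in the case $m=n$, is the one that actually produces the stated triangle.
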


\begin{proof}
See \cite[Lemma 3.4]{Wei}. The triangle is obtained by taking the short exact sequence of complexes
\[
0 \to \sigma_{\geq n+1}P \to \sigma_{\geq n}P \to P_n[-n] \to 0,
\]
where $P_n$ is the $n$-th term of the projective resolution $P$, and applying the appropriate shifts.
\end{proof}

\begin{lemma}\label{lem:syzygy-comparison}
Let $L \to B \to N \to$ be a triangle in $\Db(R)$. If $B \in \K_{(-\infty,t]}(\PR)$ for some integer $t$, then
\[
\syz^t(L) \simeq \syz^{t+1}(N).
\]
Consequently, $\syz^n(L) \simeq \syz^{n+1}(N)$ for all $n \geq t$.
\end{lemma}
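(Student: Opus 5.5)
We work with projective resolutions and deduce the statement from the triangle via the mapping cone. Choose projective resolutions $P_L$ of $L$ and $P_N$ of $N$, and represent $B$ by a complex of projectives with $B_i=0$ for $i>t$. The morphism $N\to L[1]$ in the triangle is realized, up to homotopy, by a chain map $f\colon P_N\to P_L[1]$. Rotating the triangle gives $B\simeq \operatorname{cone}(f)[-1]$, or equivalently a degreewise split sequence of complexes
\[
0\to P_L\to C\to P_N\to 0,\qquad C:=\operatorname{cone}(f)[-1].
\]
Here $C$ is a right-bounded complex of projectives with bounded homology, quasi-isomorphic to $B$, and $C_i=(P_L)_i\oplus (P_N)_i$.

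The first step is to compare $C$ with $B$. Since $C$ and $B$ both lie in $\K^{-,b}(\PR)$ and are isomorphic in $\Db(R)$, they are homotopy equivalent. By \cite[Proposition 3.5]{Wei} (Remark~\ref{rem:syzygy-well-defined}), their syzygies agree up to projective summands. Because $B_i=0$ for $i>t$, we get $\syz^{t+1}(B)=0$, so $\syz^{t+1}(C)$ is projective. Concretely, $\sigma_{\ge t+1}(C)$ is a complex of projectives that is contractible up to a projective at the bottom. In particular, $C$ is split exact in degrees $\ge t+1$ and the image $\operatorname{Im}(C_{t+1}\to C_t)$ is a direct summand of $C_t$. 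I expect this step to be the main technical point, since one has to be careful with how truncation interacts with a homotopy equivalence.

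The second step truncates the split sequence at $\sigma_{\ge t+1}$, which stays exact degreewise:
\[
0\to \sigma_{\ge t+1}P_L\to \sigma_{\ge t+1}C\to \sigma_{\ge t+1}P_N\to 0.
\]
This sequence gives a triangle in $\Db(R)$. Since the middle term is isomorphic to a projective module placed in degree $t+1$, it equals $Q[t+1]$ for some projective $Q$. Shifting by $[-(t+1)]$ yields a triangle
\[
\syz^{t+1}(L)\to Q\to \syz^{t+1}(N)\to .
\]
By Lemma~\ref{lem:syzygy-triangle}, $\syz^{t+2}(N)\to Q'\to\syz^{t+1}(N)\to$ is the standard such triangle. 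Both triangles have $\syz^{t+1}(N)\in\D_{[0,\infty)}$, and Lemma~\ref{lem:syzygy-basic}(1) gives the vanishing of $\Hom$ from projectives into positive shifts. From these facts the comparison gives $\syz^{t+1}(L)\simeq\syz^{t+2}(N)$ up to projective summands.

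That shift is off by one from the statement. To get the claimed index, redo the second step truncating at $\sigma_{\ge t}$ instead. In degree $t$, the complex $C$ has a projective component only, so $\sigma_{\ge t}C[-t]$ is a complex of projectives concentrated in degrees $[0,1]$, whose degree-$1$ part is split by the first step. Hence $\sigma_{\ge t}C[-t]$ is homotopic to a projective module $Q$ in degree $0$. Note also $\sigma_{\ge t}(P_N[1])$-indexing: because $C$ involves $P_N$ shifted by one, the $N$-part truncated at $t$ is $\sigma_{\ge t+1}P_N$ shifted, i.e.\ it computes $\syz^{t+1}(N)$. This produces a triangle
\[
\syz^t(L)\to Q\to \syz^{t+1}(N)[\,\cdot\,]\to,
\]
and comparison with Lemma~\ref{lem:syzygy-triangle} via Schanuel's lemma (projectivity of $Q$ and Lemma~\ref{lem:syzygy-basic}(1)) gives $\syz^t(L)\simeq\syz^{t+1}(N)$. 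The bookkeeping of this index shift is the delicate part. The final assertion then follows by applying $\syz^{n-t}$ to both sides and using Lemma~\ref{lem:syzygy-basic}(4).
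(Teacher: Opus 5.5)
Your overall route is the right one: realize the triangle by a degreewise split sequence $0\to P_L\to C\to P_N\to 0$, truncate, recognize the middle term as projective, and finish by a Schanuel comparison with Lemma~\ref{lem:syzygy-triangle}. This is essentially the argument behind the cited \cite[Lemma 3.7]{Wei*} [\emph{sic}; see \cite{Wei}], and your truncation is an inline proof of Lemma~\ref{lem:syzygy-preserve}. Your second step is also correct, but it only gives the cases $n\ge t+1$.

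\textbf{The index bookkeeping for $n=t$ is wrong.} As you wrote yourself, $C_i=(P_L)_i\oplus(P_N)_i$, so $P_N$ is \emph{not} shifted inside $C$. Applying $\sigma_{\ge t}$ and then $[-t]$ to the split sequence gives
\[
\syz^t(L)\longrightarrow \syz^t(C)\longrightarrow \syz^t(N)\longrightarrow ,
\]
whose third term is $\syz^t(N)$, not $\syz^{t+1}(N)$. Your claimed triangle $\syz^t(L)\to Q\to\syz^{t+1}(N)\to$ contradicts your own second step. Moreover, if it were true, Schanuel against $\syz^{t+2}(N)\to Q'\to\syz^{t+1}(N)\to$ would give $\syz^t(L)\simeq\syz^{t+2}(N)$, not the statement. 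You reach the right formula through a wrong computation. The correct finish compares the displayed triangle with $\syz^{t+1}(N)\to P_t\to\syz^t(N)\to$ from Lemma~\ref{lem:syzygy-triangle}. That triangle is built from the same $P_N$, so the third terms coincide. Schanuel then gives $\syz^t(L)\simeq\syz^{t+1}(N)$, using the vanishing of Lemma~\ref{lem:syzygy-basic}(1) for the \emph{first} terms $\syz^t(L)$ and $\syz^{t+1}(N)$.

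\textbf{The hypothesis on $B$ enters in the wrong place.} What you need is that $\syz^t(C)$ is projective. This is immediate from Remark~\ref{rem:syzygy-well-defined}: $C$ and $B$ are projective resolutions of the same object, and $\syz^t(B)=\sigma_{\ge t}(B)[-t]=B_t$ is a projective module because $B_i=0$ for $i>t$.

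Your first step only extracts that $\syz^{t+1}(C)$ is projective. From that alone it does not follow ``in particular'' that the image of $C_{t+1}\to C_t$ is a direct summand of $C_t$. For a counterexample, take the minimal projective resolution $P'\hookrightarrow P$ of a simple module $S$ of projective dimension one, placed in degrees $t+1,t$. Its $(t+1)$-st syzygy $P'$ is projective, but the image is not split. Taking $L=B$ equal to this complex and $N=0$ shows the lemma itself fails under that weaker hypothesis: it would give $S\simeq 0$ up to projective summands.

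The splitting you want does hold in the actual situation, but only because $B_{t+1}=0$. For instance, $C\simeq B$ in $\K^{-,b}(\PR)$ forces $\mathrm{id}_{C_{t+1}}=ds+sd$, which makes $d_{t+1}s_t$ an idempotent of $C_t$ with image $\operatorname{Im} d_{t+1}$. So the argument must use projectivity of $\syz^t(B)$, which is exactly the fact the paper's sketch singles out, and not just vanishing of $\syz^{t+1}(B)$.
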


\begin{proof}
See \cite[Lemma 3.7]{Wei}. The proof uses the fact that if $B$ is a bounded-above complex of projectives, then $\syz^n(B)$ is projective for all $n \ge k$.
\end{proof}

\begin{lemma}\label{lem:syzygy-preserve}
Let $L \to M \to N \to$ be a triangle in $\Db(R)$. Then for any integer $n$, there exists a triangle
\[
\syz^n(L) \longrightarrow \syz^n(M) \longrightarrow \syz^n(N) \longrightarrow.
\]
\end{lemma}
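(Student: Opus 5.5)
The plan is to reduce to a statement about projective resolutions, namely a horseshoe-type construction carried out at the level of complexes. Fix a triangle $L \xrightarrow{f} M \xrightarrow{g} N \xrightarrow{h} L[1]$ in $\Db(R)$. Since $\Db(R)\simeq \K^{-,b}(\PR)$, I will represent $L$, $M$, $N$ by right-bounded complexes of projectives $P_L$, $P_M$, $P_N$ with bounded homology.

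The first step is to realize the triangle as a degreewise split short exact sequence of complexes. Represent $g$ by a chain map $\tilde g\colon P_M\to P_N$ and take its mapping cone. Rotating, the triangle is isomorphic to the standard one
\[
P_L' \longrightarrow P_M \longrightarrow P_N \longrightarrow,
\]
where $P_L' := \mathrm{cone}(\tilde g)[-1]$ is again in $\K^{-,b}(\PR)$ and is a projective resolution of $L$. After replacing $P_M$ by $P_M\oplus \mathrm{cone}(\mathrm{id}_{P_N})[-1]$, which is contractible and so adds nothing up to homotopy, we may assume $\tilde g$ is degreewise surjective. Its kernel $K$ then consists of projectives in each degree, because the sequences split degreewise. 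This gives a short exact sequence of complexes
\[
0\to K\to P_M\to P_N\to 0
\]
with $K\simeq L$ and every term projective, split in each degree.

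The second step is to apply brutal truncation. Brutal truncation $\sigma_{\ge n}$ is exact on degreewise split sequences, since it simply forgets the terms below $n$, and it commutes with shift. So
\[
0\to \sigma_{\ge n}K\to \sigma_{\ge n}P_M\to \sigma_{\ge n}P_N\to 0
\]
is again a degreewise split short exact sequence of complexes. Any degreewise split short exact sequence of complexes yields a triangle in the homotopy category, and hence in $\Db(R)$. Shifting by $[-n]$ and using Definition~\ref{def:syzygy}, we obtain the triangle $\syz^n(L)\to\syz^n(M)\to\syz^n(N)\to$, where each term is computed with respect to the chosen resolutions $K$, $P_M$, $P_N$. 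By Remark~\ref{rem:syzygy-well-defined}, these agree with any other choice up to projective summands, which is the sense of ``isomorphic'' fixed in the paper.

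The main obstacle is this last identification. The triangle is only guaranteed for the compatible resolutions $K$, $P_M$, $P_N$, and we have modified $P_M$ by a contractible summand. I must check that $\sigma_{\ge n}$ of that contractible complex $\mathrm{cone}(\mathrm{id}_{P_N})[-1]$ is, after the shift $[-n]$, a projective module in degree $0$ plus something contractible. This holds because truncating a cone of an identity leaves only one boundary term not cancelled. Consequently the modified $\syz^n(M)$ differs from the original only by a projective summand, which is permitted by Lemma~\ref{lem:syzygy-basic}(5). With this in hand, the triangle exists for the chosen representatives, and the statement holds in the paper's ``up to projective summands'' sense.
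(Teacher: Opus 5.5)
Your argument is correct, and it takes a more explicit route than the paper. The paper only cites \cite[Proposition 3.8]{Wei}, with the remark that $\syz^n$ is induced by an exact functor on the homotopy category. You instead realize the triangle as a degreewise split short exact sequence $0\to K\to P_M\oplus\mathrm{cone}(\mathrm{id}_{P_N})[-1]\to P_N\to 0$ of projective complexes, with $K\simeq\mathrm{cone}(\tilde g)[-1]$. You then apply brutal truncation, which is exact on such sequences.

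What your route buys is precision about where the ambiguity lives. Two facts show that $\syz^n$ is not literally a triangulated functor on $\K^{-,b}(\PR)$:
\begin{itemize}
\item $\sigma_{\ge n}$ does not respect homotopies: a contractible complex truncates to a projective module in degree $n$ plus a contractible part.
\item $\syz^n(X[1])\simeq\syz^{n-1}(X)$ rather than $\syz^n(X)[1]$.
\end{itemize}
So the paper's one-line justification only makes sense modulo projective modules. Your construction shows that the correction is a single projective summand in the middle term. This is exactly why the ``up to projective summands'' convention is indispensable. For the triangle $M\to 0\to M[1]\to$ you get $\syz^n(M)\to Q\to\syz^{n-1}(M)\to$, as in Lemma~\ref{lem:syzygy-triangle}. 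Here $Q$ cannot be taken to be $0$ when $M$ is a non-projective module and $n=1$, by comparing homology degrees.

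One small simplification: your final ``obstacle'' check is unnecessary. The complex $P_M\oplus\mathrm{cone}(\mathrm{id}_{P_N})[-1]$ is itself a projective resolution of $M$, so Remark~\ref{rem:syzygy-well-defined} applies directly. Your computation of the truncated contractible summand is nonetheless correct.
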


\begin{proof}
See \cite[Proposition 3.8]{Wei}. The proof uses the fact that $\syz^n$ is induced by a functor on the homotopy category which is exact in the triangulated sense.
\end{proof}

\section{Delooping levels with degree}\label{sec:delooping}

In this section we review the three main delooping-type invariants and discuss their behavior under derived equivalences.

\subsection{Classical delooping level}

\begin{definition}\label{def:classical-dell}
Let $M \in \modd R$. The \emph{delooping level} of $M$ is
\[
\dell(M) = \inf\{n \geq 0 \mid \Omega^n(M)  \oemb  \Omega^{n+1}(N) \text{ for some } N \in \modd R\}.
\]
If no such $n$ exists, we set $\dell(M) = \infty$. The \emph{delooping level} of $R$ is $\dell(R) = \sup\{\dell(S) \mid S \text{ simple}\}$.
\end{definition}

\subsection{Sub-derived and derived delooping levels}

Guo and Igusa \cite{GuoIgusa} introduced refinements that close the gap between $\dell$ and $\Findim$.

\begin{definition}\label{def:subddell}
The \emph{sub-derived delooping level} of $M \in \modd R$ is
\[
\subddell(M) = \inf\{\dell(N) \mid M \text{ is a direct summand of } N \text{ in } \modd R\}.
\]
The \emph{sub-derived delooping level} of $R$ is $\subddell(R) = \sup\{\subddell(S) \mid S \text{ simple}\}$.
\end{definition}

\begin{definition}\label{def:k-dell}
Let $M \in \modd R$. The \emph{$k$-delooping level} of $M$ is
\[
k\text{-}\dell(M) = \inf\{n \geq 0 \mid \Omega^n(M)  \oemb  \Omega^{n+k}(N) \text{ for some } N \in \modd R\}.
\]
If no such $n$ exists, we set $k\text{-}\dell(M) = \infty$. The \emph{$k$-delooping level} of $R$ is $k\text{-}\dell(R) = \sup\{k\text{-}\dell(S) \mid S \text{ simple}\}$.
\end{definition}

It is easy to see that $\dell(M)\le k\text{-}\dell(M)\le (k+1)\text{-}\dell(M)$ for $k\ge 1$.

\begin{definition}\label{def:ddell}
For $k\ge 1$, the \emph{$k$-derived delooping level} of $M \in \modd R$ is
\[
k\text{-}\ddell(M) = \inf \left\{ m \in \mathbb{N} \;\middle|\;
\begin{aligned}
&\exists n \leq m \text{ and an exact sequence in } \modd R \\
&\quad 0 \to C_n \to C_{n-1} \to \cdots \to C_1 \to C_0 \to M \to 0, \\
&\text{where } (i+k)\text{-}\dell(C_i) \leq m-i \text{ for } i=0,1,\ldots,n
\end{aligned}
\right\}.
\]
The \emph{$k$-derived delooping level} of $R$ is $k\text{-}\ddell(R) = \sup\{k\text{-}\ddell(S) \mid S \text{ simple}\}$.
\end{definition}

As in \cite{GuoIgusa}, we drop the $k$ when $k=1$ and write $\ddell(M)$ instead of $1\text{-}\ddell(M)$.
The following result collects the relations between the delooping invariants.

\begin{corollary}\label{cor:compare-k-dell} Let $k\ge 1$ and $M\in \modd R$.

   $(1)$ $k\text{-}\ddell(M)\le k\text{-}\dell(M)$ and $\ddell(M)\le k\text{-}\ddell(M) \le (k+1)\text{-}\ddell(M)$. 
   
   $(2)$ $k\text{-}\ddell(R)\le k\text{-}\dell(R)$ and  $\ddell(R)\le k\text{-}\ddell(R)\le (k+1)\text{-}\ddell(R)$. 
\end{corollary}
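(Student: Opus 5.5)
The plan is to prove the three inequalities in part $(1)$ directly from Definition~\ref{def:ddell}, and then obtain part $(2)$ by taking the supremum over simple modules.

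For $k\text{-}\ddell(M)\le k\text{-}\dell(M)$, I will use the trivial resolution. Suppose $k\text{-}\dell(M)=m<\infty$; otherwise there is nothing to prove. Take $n=0$ and the exact sequence $0\to C_0\to M\to 0$ with $C_0=M$. The single condition required is $(0+k)\text{-}\dell(C_0)\le m-0$, which is exactly the hypothesis. Hence $m$ lies in the set whose infimum defines $k\text{-}\ddell(M)$.

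For $k\text{-}\ddell(M)\le (k+1)\text{-}\ddell(M)$, the key ingredient is the monotonicity $j\text{-}\dell(C)\le (j+1)\text{-}\dell(C)$ for every $j\ge 1$, which the paper already records right after Definition~\ref{def:k-dell}. I would justify it as follows. Suppose $\Omega^n(C)\oemb\Omega^{n+j+1}(N)$. Since $\Omega^{n+j+1}(N)=\Omega^{n+j}(\Omega N)$, we get $\Omega^n(C)\oemb\Omega^{n+j}(N')$ with $N'=\Omega N$.

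Now suppose $(k+1)\text{-}\ddell(M)=m$, witnessed by an exact sequence $0\to C_n\to\cdots\to C_0\to M\to 0$ with $(i+k+1)\text{-}\dell(C_i)\le m-i$ for each $i$. By monotonicity, $(i+k)\text{-}\dell(C_i)\le (i+k+1)\text{-}\dell(C_i)\le m-i$. So the same sequence witnesses $k\text{-}\ddell(M)\le m$.

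The inequality $\ddell(M)\le k\text{-}\ddell(M)$ then follows by iterating the previous step from degree $1$ up to degree $k$, or equivalently by applying the monotonicity $(i+1)\text{-}\dell\le (i+k)\text{-}\dell$ directly.

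Part $(2)$ follows from part $(1)$ by taking the supremum over all simple $R$-modules $S$, since each of the inequalities holds pointwise for every $S$.

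No step is a genuine obstacle here. The only point that needs care is the direction of the monotonicity in the degree $j$: a larger degree is a stronger requirement on $C$, so it gives a larger $\dell$. This direction is what makes the witnessing sequences transfer from degree $k+1$ down to degree $k$.
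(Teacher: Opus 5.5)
Your proposal is correct and is essentially the argument the paper has in mind. The paper gives no separate proof: it treats the statement as immediate from Definition~\ref{def:ddell}, using the trivial length-$0$ sequence for $k\text{-}\ddell(M)\le k\text{-}\dell(M)$, together with the monotonicity $\dell(M)\le k\text{-}\dell(M)\le (k+1)\text{-}\dell(M)$ recorded after Definition~\ref{def:k-dell}. Your justification of that monotonicity via $\Omega^{n+j+1}(N)\cong\Omega^{n+j}(\Omega N)$, and your passage to part $(2)$ by taking the supremum over simples, fill in the routine details correctly.
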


We refer to \cite[Section 3]{GuoIgusa} for further properties of these invariants. One of them is that $k\text{-}\ddell(R)<\infty$ is equivalent to the fact that $k\text{-}\ddell(M)<\infty$ for each $M\in \modd R$.

\subsection{The non-invariance of $\dell$ and $\subddell$ under derived equivalences}

Chen's counterexample \cite{Chen} shows that neither $\dell$ nor $\subddell$ has finiteness preserved under derived equivalences. We recall the construction.

\begin{example}\label{ex:kershaw-rickard}
Let $K$ be a field with $q \in K^\times$ of infinite multiplicative order. Let
\[
\Lambda = K\langle x,y,z\rangle/(x^2, y^2, z^2, yz, xy+qyx, xz-zx, zy-zx).
\]
For $\alpha \in K$, let $M(\alpha)$ be the 3-dimensional left $\Lambda$-module with basis $v,v',v''$ where $xv = \alpha v'$, $yv = v'$, $zv = v''$. Let $B = \begin{pmatrix} K & 0 \\ M(q) & \Lambda \end{pmatrix}$.

Then $\dell(B) = \subddell(B) = \infty$. However, $B$ is derived equivalent to $C = \begin{pmatrix} \Lambda & D(M(q)) \\ 0 & K \end{pmatrix}$, and $\dell(C) = \subddell(C) = 0$.
\end{example}

\subsection{Degree and derived delooping resolutions}\label{sec:derived-ddell}

We introduce the following notions.

\begin{definition}\label{def:classical-ddell}
$(1)$ We call the integer $k$ in Definition \ref{def:k-dell} the \emph{delooping degree}. Thus, $k\text{-}\dell(M)$ means the delooping level of $M$ with degree $k$.
 
$(2)$ Let $M \in \modd R$. We say that $M$ admits a \emph{derived delooping resolution $($of length $n$$)$ with degree $k$}, or shortly \emph{$k$-$\ddell$ resolution}, if there is a finite exact sequence in $\modd R$:
\[
0 \longrightarrow D_n \longrightarrow D_{n-1} \longrightarrow \cdots \longrightarrow D_1 \longrightarrow D_0 \longrightarrow M \longrightarrow 0
\]
such that for each $i \in \{0, 1, \ldots, n\}$, there exists a module $N_i \in \modd R$ with
\[
\Omega^{n-i}(D_i) \oemb \Omega^{n+k}(N_i),
\]
meaning that $\Omega^{n-i}(D_i)$ is a direct summand of $\Omega^{n+k}(N_i)$, i.e., $(i+k)$-$\dell(D_i)\le n-i$.
\end{definition}

We have the following characterization of derived delooping levels with degree $k$ in terms of derived delooping resolutions with degree $k$.

\begin{lemma}\label{lem:k-ddell}
For a fixed integer $k \ge 1$ and $M \in \modd R$, the following holds:
\[
k\text{-}\ddell(M) = \inf\{\, n \ge 0 \mid \text{\(M\) admits a \(k\)-\(\ddell\) resolution of length \(n\)} \,\}.
\]
Thus $k\text{-}\ddell(M)$ is the minimal length of a $k$-$\ddell$ resolution of $M$.
\end{lemma}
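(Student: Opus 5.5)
The lemma says that the infimum in Definition~\ref{def:ddell} equals the minimal length of a $k$-$\ddell$ resolution in the sense of Definition~\ref{def:classical-ddell}(2). I will prove the two inequalities separately, working directly with the definitions. Throughout I use the convention (implicit in Definition~\ref{def:classical-ddell}) that a resolution of length $n$ is allowed to have some terms equal to zero.

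\emph{Step 1: $k\text{-}\ddell(M)\le \inf\{\cdots\}$.} Suppose $M$ admits a $k$-$\ddell$ resolution of length $n$, with terms $D_0,\dots,D_n$. Set $m=n$ and use the same exact sequence in Definition~\ref{def:ddell}. For each $i$ the resolution gives $\Omega^{n-i}(D_i)\oemb\Omega^{n+k}(N_i)$. Since $n+k=(n-i)+(i+k)$, this is exactly the statement $(i+k)\text{-}\dell(D_i)\le n-i=m-i$. Hence $m=n$ belongs to the set defining $k\text{-}\ddell(M)$, and the first inequality follows.

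\emph{Step 2: $\inf\{\cdots\}\le k\text{-}\ddell(M)$.} Let $m=k\text{-}\ddell(M)<\infty$ (if it is infinite there is nothing to prove). Definition~\ref{def:ddell} then gives some $n\le m$ and an exact sequence $0\to C_n\to\cdots\to C_0\to M\to 0$ with $(i+k)\text{-}\dell(C_i)\le m-i$ for each $i$. The aim is to produce a $k$-$\ddell$ resolution of length exactly $m$. Extend the sequence by zeros: set $D_i=C_i$ for $i\le n$ and $D_i=0$ for $n<i\le m$. This is still exact.
\begin{itemize}
\item For $i>n$ the required condition holds trivially, since $\Omega^{m-i}(0)=0$ is a summand of $\Omega^{m+k}(0)$.
\item For $i\le n$ we know $\Omega^{j}(C_i)\oemb\Omega^{j+i+k}(N)$ for some $j\le m-i$ and some module $N$. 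The condition needed is $\Omega^{m-i}(C_i)\oemb\Omega^{m+k}(N')$ for some $N'$.
\end{itemize}

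\emph{Main point: monotonicity of the delooping condition.} The second case rests on the following fact: if $X\oemb\Omega^{s}(N)$, then $\Omega^{1}(X)\oemb\Omega^{s+1}(N)$. This holds because syzygy commutes with finite direct sums up to projective summands, and projective summands may be dropped (or absorbed, e.g.\ by replacing $N$ with $N\oplus R$ if needed). Iterating this $m-i-j$ times turns $\Omega^{j}(C_i)\oemb\Omega^{j+i+k}(N)$ into $\Omega^{m-i}(C_i)\oemb\Omega^{m+k}(N)$. So we may take $N_i=N$.

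Combining the two cases shows that $(D_i)$ is a $k$-$\ddell$ resolution of length $m$, which gives the reverse inequality.

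The only step requiring care is this bookkeeping with projective summands. If syzygies are taken minimal (stable equivalence), the argument goes through cleanly.
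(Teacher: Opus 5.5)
Your proof is correct and takes the same approach as the paper, whose proof just says the equality is immediate from Definition~\ref{def:ddell}: an admissible sequence of length $n$ gives $k\text{-}\ddell(M)\le n$, and the infimum is attained at the shortest resolution. Your Step~2 makes explicit two things the paper leaves tacit, namely padding with zero terms up to length $m$ and the monotonicity $X\oemb\Omega^{s}(N)\Rightarrow\Omega(X)\oemb\Omega^{s+1}(N)$ (exact for minimal syzygies); together they convert $(i+k)\text{-}\dell(C_i)\le m-i$ into $\Omega^{m-i}(C_i)\oemb\Omega^{m+k}(N_i)$.
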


\begin{proof}
This is immediate from Definition \ref{def:ddell}: an admissible exact sequence of length $n$ gives $k\text{-}\ddell(M)\le n$, and the definition of $k\text{-}\ddell(M)$ as an infimum over $m$ is attained at the length of the shortest resolution. Hence the two infima coincide.
\end{proof}

The following result will be used in the next section.

\begin{lemma}\label{lem:syz-k-ddell} 
Let $M \in \modd R$, $k \ge 1$ and $n\ge 0$.

$(1)$ If $M$ admits a \(k\)-\(\ddell\) resolution of length \(m\), then $\Omega^{n}M$ admits an \((n+k)\)-\(\ddell\) resolution of length \(m\).

$(2)$ If $\Omega^{n}(M)$ admits a \(k\)-\(\ddell\) resolution of length \(m\) with $k>n$, then $M$ admits a \((k-n)\)-\(\ddell\) resolution of length \(m'\) such that \(m\le m'\le m+n\).
\end{lemma}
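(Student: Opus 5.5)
For $(1)$, the idea is to apply $\Omega^n$ termwise to a given $k$-$\ddell$ resolution. The only real work is to keep the sequence exact after doing so. For $(2)$, the idea is to splice the given resolution of $\Omega^n(M)$ with the first $n$ steps of a projective resolution of $M$.

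\emph{Part $(1)$.} Let
\[
0\to D_m\to\cdots\to D_0\to M\to 0
\]
be a $k$-$\ddell$ resolution, so that $\Omega^{m-i}(D_i)\oemb\Omega^{m+k}(N_i)$ for each $i$.
\begin{enumerate}
\item Break the resolution into short exact sequences $0\to K_i\to D_i\to K_{i-1}\to 0$, with $K_{-1}=M$ and $K_{m-1}=D_m$.
\item Apply the horseshoe lemma $n$ times to each of them. This gives short exact sequences
\[
0\to\Omega^n(K_i)\to\Omega^n(D_i)\oplus Q_i\to\Omega^n(K_{i-1})\to 0
\]
with $Q_i$ projective.
\item Splice these to obtain an exact sequence
\[
0\to D'_m\to\cdots\to D'_0\to\Omega^n(M)\to 0,\qquad D'_i=\Omega^n(D_i)\oplus Q_i .
\]
\item Check the degree condition. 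Since $\Omega^{m-i}$ kills projective summands when $m-i\ge 1$, we get
\[
\Omega^{m-i}(D'_i)\simeq\Omega^{n}\bigl(\Omega^{m-i}(D_i)\bigr).
\]
Because $\Omega^{m-i}(D_i)$ is a direct summand of $\Omega^{m+k}(N_i)$ and $\Omega^n$ commutes with finite direct sums, this is a direct summand of $\Omega^{m+(n+k)}(N_i)$. Hence $(i+n+k)$-$\dell(D'_i)\le m-i$, which is exactly the condition for an $(n+k)$-$\ddell$ resolution of length $m$.
\end{enumerate}

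The one delicate point, and where I expect the main obstacle, is the last index $i=m$. There $\Omega^{0}(D'_m)=\Omega^n(D_m)\oplus Q_m$ still carries the projective summand $Q_m$. I would deal with this using the convention of Remark~\ref{rem:syzygy-well-defined} that syzygies, and hence the relation $\oemb$, are taken up to projective summands. Alternatively, one can avoid the issue by arranging the horseshoe construction so that no projective summand is added to the leftmost term: choose the resolutions of $K_{m-1}=D_m$ to be the ones used in $\Omega^n(D_m)$ itself.

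\emph{Part $(2)$.} Let
\[
0\to D_m\to\cdots\to D_0\to\Omega^n(M)\to 0
\]
be a $k$-$\ddell$ resolution with witnesses $N_i$, and let
\[
0\to\Omega^n(M)\to P_{n-1}\to\cdots\to P_0\to M\to 0
\]
come from a projective resolution of $M$.
\begin{enumerate}
\item Splice the two sequences at $\Omega^n(M)$. This gives an exact sequence of length $m'=m+n$:
\[
0\to D_m\to\cdots\to D_0\to P_{n-1}\to\cdots\to P_0\to M\to 0 .
\]
\item Check the terms $D_i$, which now sit in position $i+n$. For degree $k-n\ge 1$ (this is where $k>n$ is used) the condition on this term reads
\[
\Omega^{m'-(i+n)}(D_i)=\Omega^{m-i}(D_i)\oemb\Omega^{m'+(k-n)}(N_i)=\Omega^{m+k}(N_i),
\]
which is exactly the hypothesis.
\item Check the projective terms $P_j$, which sit in position $j\le n-1$. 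Here $m'-j\ge 1$, so $\Omega^{m'-j}(P_j)=0$, which is trivially a summand of anything.
\end{enumerate}
Thus $M$ admits a $(k-n)$-$\ddell$ resolution of length $m'=m+n$, which lies in the required range $m\le m'\le m+n$.
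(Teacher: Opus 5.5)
Your proof is correct and is essentially the paper's argument. The paper takes the case $n=1$ from Guo--Igusa (Lemmas 3.2 and 3.3) and inducts on $n$, whereas you do the $n$ steps at once: an $n$-fold horseshoe lemma in (1), and in (2) a splice with the first $n$ terms of a projective resolution of $M$, which gives $m'=m+n$.

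Your worry about $Q_m$ is moot. The leftmost term of your spliced sequence is $\Omega^n(K_{m-1})=\Omega^n(D_m)$ itself, so no projective summand appears there and you never need the up-to-projectives convention.
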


\begin{proof}
The case $n=0$ is obvious, since $\Omega^{0}M=M$. The case $n=1$ is \cite[Lemmas 3.2 and 3.3]{GuoIgusa}. The general case follows by induction on $n$.
\end{proof}

\section{Degree-shift derived invariance of finiteness of $\ddell$}\label{sec:invariance}

We now prove the main theorem. We work under the following setup throughout this section.

\begin{setup}\label{setup:derived}
Let \(\mathcal{F}: \Db(R) \to \Db(S)\) be a derived equivalence with quasi-inverse \(\mathcal{G}: \Db(S) \to \Db(R)\). Let \(T := \mathcal{G}(S) \in \K^b(\PR)\) be the tilting complex with width $k_T$. Thus, after a shift, we assume \(T \in \K_{[-k_T,0]}(\PR)\).
\end{setup}

\subsection{Image formulas}

The following lemmas control the homological support under derived equivalences.

\begin{lemma}\label{lem:FR} $
\mathcal F(R) \subseteq \K_{[0,k_T]}(\mathcal P_S).$
\end{lemma}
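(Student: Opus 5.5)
We need to show that $\mathcal F(R)$ is isomorphic in $\Db(S)$ to a complex of finitely generated projective $S$-modules concentrated in degrees $0$ to $k_T$. The plan has two parts. First we show $\mathcal F(R)\in \K^b(\PS)$. Then we locate its homology and its projective dimension by computing morphisms out of and into $S$, using that $\mathcal F$ is fully faithful and $\mathcal G(S)=T\in\K_{[-k_T,0]}(\PR)$.

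Since $R\in\K^b(\PR)$ is compact and $\mathcal F$ is a triangle equivalence, $\mathcal F(R)$ is compact. Equivalently, $\mathcal F$ sends $\K^b(\PR)=\operatorname{thick}(R)$ onto $\operatorname{thick}(\mathcal F(R))$, and this equals $\operatorname{thick}(S)=\K^b(\PS)$ because $\add T$ generates $\K^b(\PR)$. So $\mathcal F(R)$ has a bounded projective model $P$.

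\textbf{Homology.} For every $j$,
\[
H_j(\mathcal F(R))\cong \Hom_{\D(S)}(S[j],\mathcal F(R))\cong \Hom_{\D(R)}(T[j],R).
\]
Since $T[j]$ lives in degrees $[-k_T+j,\,j]$ and $R$ sits in degree $0$, a chain map $T[j]\to R$ is zero unless degree $0$ lies in $[-k_T+j,j]$, that is, unless $0\le j\le k_T$. Hence $H_j(\mathcal F(R))=0$ outside $[0,k_T]$.

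\textbf{Lower end.} Let $P$ be a minimal complex of finitely generated projectives representing $\mathcal F(R)$. Because its homology vanishes below degree $0$, we may replace $P$ by a complex with $P_i=0$ for $i<0$. Concretely, take a projective resolution of the truncation: since $H_i=0$ for $i<0$, the complex is quasi-isomorphic to one starting in degree $0$.

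\textbf{Upper end.} Here we need that $P$ stops at degree $k_T$, which is a projective-dimension statement. For any module $N$ and any $j>k_T$ we check
\[
\Hom_{\D(S)}(\mathcal F(R),N[j])\cong \Hom_{\D(R)}(R,\mathcal G(N)[j])=H_{-j}(\mathcal G(N)).
\]
It remains to bound the homology of $\mathcal G(N)$. We have
\[
H_{i}(\mathcal G(N))\cong\Hom_{\D(R)}(R[i],\mathcal G(N))\cong \Hom_{\D(S)}(\mathcal F(R)[i],N).
\]
This is circular, so we use $\mathcal G$ directly instead. $\mathcal G(N)$ is computed as $N\otimes^{\mathbf L}_S {}_S\tilde T_R$, where $\tilde T$ is a two-sided tilting complex whose restriction to $R$ is $T$. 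It therefore lies in degrees $[-k_T,0]$, so $H_{-j}(\mathcal G(N))=0$ for $j>k_T$.

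Alternatively, we avoid bimodules: $\mathcal G(N)$ has a filtration by cones of $\mathcal G$ applied to a projective resolution of $N$. Each $\mathcal G(Q)\in\add T$ lies in degrees $[-k_T,0]$, which gives homology in degrees $\ge -k_T$. Combined with the vanishing $\Hom(\mathcal F(R),N[j])=0$ for all modules $N$ and all $j>k_T$, this lets the minimal bounded complex $P$ be truncated so that $P_i=0$ for $i>k_T$. The argument runs as follows: at the top nonzero degree $t$ of a minimal complex, the projection $P\to P_t[t]\to \mathrm{top}(P_t)[t]$ is a nonzero morphism, so $t\le k_T$.

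\textbf{Main obstacle.} I expect the main obstacle to be this upper bound, specifically controlling $H_{-j}(\mathcal G(N))$ for $j>k_T$ cleanly. I would first try the two-sided tilting complex description of $\mathcal G$ (Rickard) to place $\mathcal G(N)$ in $\D_{[-k_T,0]}(R)$.
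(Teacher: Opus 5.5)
Your proof is correct, but it takes a different route from the paper. The paper proves the lemma in one line: it identifies $\mathcal F(R)$ with the $R$-dual $\Hom_R(T,R)$ of $T$ and reads off that its terms lie in degrees $0,\dots,k_T$. That is fast, but it leaves implicit why this dual admits a model by projective right $S$-modules in the same degrees, since its terms are projective \emph{left $R$-modules}. Closing that needs a bimodule model of the equivalence, or an argument like yours.

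You instead characterize membership in $\K_{[0,k_T]}(\PS)$ intrinsically for a minimal complex $P\in\K^b(\PS)$:
\begin{itemize}
\item homology vanishing below $0$, from $H_j(\mathcal F(R))\cong\Hom_{\D(R)}(T[j],R)$, kills the terms below degree $0$;
\item vanishing of $\Hom_{\D(S)}(\mathcal F(R),N[j])$ for all modules $N$ and all $j>k_T$ kills the terms above $k_T$, via the nonzero map $P\to\mathrm{top}(P_t)[t]$.
\end{itemize}
This uses only the adjunction, $\mathcal G(S)=T$ and $\mathcal G(\PS)\subseteq\add T$. It needs no two-sided tilting complex, which Rickard's theorem provides only under a flatness hypothesis over the base ring. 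As you noticed, it also avoids circularity: in the paper, the homology bound $\mathcal G(\modd S)\subseteq\D_{[-k_T,0]}(R)$ of Lemma~\ref{lem:image-bounded}(2) is itself deduced from this lemma.

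The one step you should write out is the ``filtration by cones'', because a projective resolution $Q$ of $N$ may be infinite. Your first step already gives a bounded $P\cong\mathcal F(R)$. So for $m\gg0$, the triangle $\sigma_{\ge m+1}Q\to Q\to\sigma_{\le m}Q\to$ gives $\Hom_{\D(S)}(P,N[j])\cong\Hom_{\D(S)}(P,(\sigma_{\le m}Q)[j])\cong H_{-j}\bigl(\mathcal G(\sigma_{\le m}Q)\bigr)$. Moreover $\mathcal G(\sigma_{\le m}Q)\in\K_{[-k_T,m]}(\PR)$ by the cone argument of Lemma~\ref{lem:image-projective}(1), which does not depend on the present lemma. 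This gives the required vanishing for $j>k_T$.
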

\begin{proof}
As $T:=\mathcal G(S)\in \mathcal K_{[-k_T,0]}(\mathcal P_R)$ is a tilting complex, its dual complex $\operatorname{Hom}_R(T,R)$ has terms in degrees $0,\dots,k_T$. Since $\mathcal F$ is the quasi-inverse of $\mathcal G$, the object $\mathcal F(R)$ is just the dual of $T$, hence $\mathcal F(R)\in \mathcal K_{[0,k_T]}(\mathcal P_S)$. 
\end{proof}

\begin{lemma}\label{lem:image-bounded}
Let $a \leq b$. Then 

$(1)$ $ \mathcal{F}(\D_{[a,b]}(R)) \subseteq \D_{[a,b+k_T]}(S)$.
In particular, $\mathcal{F}(\modd R) \subseteq \D_{[0,k_T]}(S)$.

$(2)$ \(
\mathcal G(\D_{[a,b]}(S)) \subseteq \D_{[a-k_T,\, b]}(R).
\)
In particular, \(\mathcal G(\mathrm{mod}\,S)\subseteq \D_{[-k_T,0]}(R)\).
\end{lemma}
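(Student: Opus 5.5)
The strategy is to compute homology of $\mathcal F(X)$ via Hom from the projective generator $S$, and homology of $\mathcal G(Y)$ via Hom from $R$. We work in terms of the tilting complex $T=\mathcal G(S)\in\K_{[-k_T,0]}(\PR)$ and the complex $\mathcal F(R)\in\K_{[0,k_T]}(\PS)$ of Lemma~\ref{lem:FR}.

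For (1), fix $X\in\D_{[a,b]}(R)$ and compute
\[
H_i(\mathcal F(X))\cong\Hom_{\D(S)}(S,\mathcal F(X)[-i])\cong\Hom_{\D(R)}(T,X[-i]).
\]
Here we use $H_i(Z)\cong\Hom(S,Z[-i])$ for complexes of $S$-modules, with the paper's convention $(X[n])_i=X_{i-n}$. Since $T$ is a complex of projectives, this Hom is computed by homotopy classes of chain maps $T\to X'[-i]$, where $X'$ is a complex representing $X$. Replacing $X$ by a complex supported in degrees $[a,b]$ is not directly possible in general, so we use the standard truncation argument instead. Let $T$ have terms in degrees $-k_T,\dots,0$. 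If $i<a$, write $X$ as built by iterated triangles from $H_j(X)[j]$ for $a\le j\le b$, so it suffices to treat a single module $M[j]$. For that case,
\[
\Hom(T,M[j-i])=H^{\,i-j}\bigl(\Hom_R(T,M)\bigr).
\]
The complex $\Hom_R(T,M)$ is concentrated in cohomological degrees $0,\dots,k_T$. Hence this group vanishes unless $0\le i-j\le k_T$, that is, unless $j\le i\le j+k_T$. Taking all $j\in[a,b]$ and using the long exact sequences from the triangles, $H_i(\mathcal F(X))=0$ outside $[a,b+k_T]$. This proves (1). The special case $a=b=0$ gives $\mathcal F(\modd R)\subseteq\D_{[0,k_T]}(S)$.

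For (2), we argue symmetrically with $H_i(\mathcal G(Y))\cong\Hom_{\D(R)}(R,\mathcal G(Y)[-i])\cong\Hom_{\D(S)}(\mathcal F(R),Y[-i])$. We again reduce to $Y=N[j]$ with $N\in\modd S$ and $a\le j\le b$. Since $\mathcal F(R)\in\K_{[0,k_T]}(\PS)$, the complex $\Hom_S(\mathcal F(R),N)$ lives in cohomological degrees $-k_T,\dots,0$. Therefore $\Hom(\mathcal F(R),N[j-i])=H^{\,i-j}$ vanishes unless $-k_T\le i-j\le 0$, that is, unless $j-k_T\le i\le j$. Ranging over $j\in[a,b]$ gives $\mathcal G(Y)\in\D_{[a-k_T,b]}(R)$. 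The case $a=b=0$ gives the stated consequence.

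The main care needed is the sign and degree bookkeeping: the shift convention $(X[n])_i=X_{i-n}$ has to be matched consistently with the homological degrees of $T$ and $\mathcal F(R)$. The dévissage along homology, induction on $b-a$ via the truncation triangles $\tau_{\ge j+1}X\to X\to H_j(X)[j]\to$, is routine.
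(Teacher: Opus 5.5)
Your proof is correct and follows the paper's route. You use the adjunction isomorphisms $H_i(\mathcal F(X))\cong\Hom_{\D(R)}(T,X[-i])$ and $H_i(\mathcal G(Y))\cong\Hom_{\D(S)}(\mathcal F(R),Y[-i])$ (the latter via Lemma~\ref{lem:FR}) and read off the vanishing from the degree supports; your d\'evissage over the factors $H_j(X)[j]$ just spells out what the paper asserts in one line.

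One side remark is wrong, though harmless: a complex with homology in $[a,b]$ \emph{can} be replaced by a quasi-isomorphic complex of modules concentrated in $[a,b]$, namely the smart truncation $\tau_{\le b}\tau_{\ge a}X$. Since $T$ is a bounded complex of projectives, $\Hom_{\D(R)}(T,X[-i])$ may be computed by chain maps into that representative, which is exactly why the paper's direct vanishing claim needs no further argument.
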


\begin{proof}
(1) For $M \in \D_{[a,b]}(R)$,
\[
H_i(\mathcal{F}(M)) \cong \Hom_{\D(S)}(S, \mathcal{F}(M)[-i]) \cong \Hom_{\D(R)}(T, M[-i]).
\]
The isomorphism follows from the adjunction between $\mathcal{F}$ and $\mathcal{G}$. Since $T \in \K_{[-k_T,0]}(\PR)$ and $M \in \D_{[a,b]}(R)$, the right-hand side vanishes unless $i \in [a, b+k_T]$. 

(2) Dually (using Lemma \ref{lem:FR}). 
\end{proof}

\begin{lemma}\label{lem:image-projective}
Let $a \leq b$. Then 

$(1)$ $\mathcal{G}(\K_{[a,b]}(\PS)) \subseteq \K_{[a-k_T,b]}(\PR)$.

$(2)$ $\mathcal F(\K_{[a,b]}(\mathcal P_R)) \subseteq \K_{[a,\; b+k_T]}(\mathcal P_S).$ 
\end{lemma}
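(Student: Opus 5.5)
Both parts rest on a single observation: an equivalence of triangulated categories sends the generator $R$ of $\K^b(\PR)$ to $\mathcal F(R)$, and $\K^b(\PR)$ is intrinsically characterized inside $\Db$ as the subcategory of compact (equivalently, homologically finite) objects. Hence $\mathcal F$ and $\mathcal G$ restrict to equivalences $\K^b(\PR)\simeq\K^b(\PS)$. So for $X\in\K_{[a,b]}(\PR)$ we already know $\mathcal F(X)\in\K^b(\PS)$. It remains to pin down the range of degrees in which a minimal (or at least some) representative of $\mathcal F(X)$ can be chosen.

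For the upper bound in (2), I would apply Lemma~\ref{lem:image-bounded}(1): since $X\in\D_{[a,b]}(R)$, we get $\mathcal F(X)\in\D_{[a,b+k_T]}(S)$. A bounded complex of projectives $P$ with homology in degrees $\le b+k_T$ can be truncated at the top: the high part of $P$ is split exact, so $P$ is homotopy equivalent to a complex vanishing above $b+k_T$. Alternatively, build $X$ from its terms $X_i[i]$ ($a\le i\le b$) by iterated cones of the brutal truncation triangles. Each $X_i[i]\in\add R[i]$ maps to $\add\mathcal F(R)[i]\subseteq\K_{[i,i+k_T]}(\PS)$ by Lemma~\ref{lem:FR}, and cones of maps between complexes concentrated in $[a,b+k_T]$ stay in $\K_{[a,b+k_T]}$ after taking mapping cones termwise. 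This filtration argument is cleaner and gives both bounds at once. I would use it: induct on $b-a$ via the triangle $\sigma_{\ge a+1}X\to X\to X_a[a]\to$, or rather the rotated form expressing $X$ as a cone of a map $X_a[a-1]\to\sigma_{\ge a+1}X$. Some care is needed here. A cone of a map from $\K_{[a,b+k_T]}$-type pieces shifted by one can stretch the range by one degree. So I would set up the triangle so that $X=\mathrm{Cone}(f)$ with $f\colon \sigma_{\ge a+1}X[-1]\to X_a[a]$, or the dual ordering, and check that the cone of a chain map $U\to V$ between complexes of projectives lies in degrees $[\min(\deg V),\max(\deg U)+1]$. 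Choosing the ordering so that the degree bookkeeping matches $[a,b+k_T]$ is the main (routine but fiddly) obstacle. Since $\mathcal F$ is triangulated, $\mathcal F(X)$ is the cone of $\mathcal F(f)$, and $\mathcal F(f)$ can be realized by an honest chain map between the bounded projective representatives, because $\Hom_{\D}=\Hom_{\K}$ for bounded-above projective complexes.

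Part (1) is the same argument with $\mathcal G$ and $T=\mathcal G(S)\in\K_{[-k_T,0]}(\PR)$ in place of $\mathcal F(R)$. Then $\mathcal G(P[i])\in\add T[i]\subseteq\K_{[i-k_T,i]}(\PR)$, and the iterated cones give $\K_{[a-k_T,b]}(\PR)$. As a cross-check, the homology range agrees with Lemma~\ref{lem:image-bounded}(2).
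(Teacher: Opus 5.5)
Your committed argument is correct and is exactly the paper's proof: you realize $X$ as the cone of $\sigma_{\ge a+1}X[-1]\to X_a[a]$ and induct on $b-a$, so that a cone of a map from $\K_{[a,\,b+k_T-1]}$ to $\K_{[a,\,a+k_T]}$ lands in $\K_{[a,\,b+k_T]}$ (and likewise for $\mathcal G$), and the paper's triangles $\Omega^{i+1}(X)\to P_i\to\Omega^{i}(X)\to$ are your brutal-truncation triangles shifted by $[-i]$. Do drop the abandoned aside about truncating at the top, because it is false: a non-split monomorphism of projectives with non-projective cokernel, placed in degrees $1,0$, has homology only in degree $0$, yet it is not homotopy equivalent to any projective complex concentrated in degrees $\le 0$ (epimorphisms onto projectives split but monomorphisms between projectives need not, so exactness only lets you trim a bounded projective complex from below).
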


\begin{proof} (1)
Note that $\mathcal{G}(\K_{[a,b]}(\PS)) = \mathcal{G}(\K_{[0,b-a]}(\PS)[a])$.  For any $X\in \K_{[0,b-a]}(\PS)$, there are triangles $$\Omega^{i+1}(X)\to P_i\to \Omega^{i}(X) \to, \hskip 20pt i=0,\cdots,b-a-1,$$ with all $P_i$ projective,  $P_{b-a}=\Omega^{b-a}(X)$ projective and $X = \Omega^0(X)$. Applying $\mathcal{G}$ to these triangles and noting that $\mathcal{G}(P_i)\in \K_{[-k_T,0]}(\PR)$ and that the mapping cone moves the left side of the complex further one step while keeping the right side, we obtain that $\mathcal{G}(X)\in \K_{[-k_T,b-a]}(\PR)$. It follows that $\mathcal{G}(\K_{[a,b]}(\PS)) \subseteq \K_{[a-k_T,b]}(\PR)$. 

(2) Dually.
\end{proof}

\subsection{The syzygy descent lemma}

The following is the key technical tool for transferring syzygy properties across a derived equivalence.

\begin{lemma}\label{lem:syzygy-descent}
Let $i$ be an integer.

$(1)$ \(
\syz^{j+1}(\mathcal{G}(\syz^i(C))) \simeq \syz^{j}(\mathcal{G}(\syz^{i+1}(C))) \text{ holds for any } C \in \Db(S) \text{ and any } j\ge 0.
\)

$(2)$ \(
\syz^{j+1}(\mathcal{F}(\syz^i(C))) \simeq \syz^{j}(\mathcal{F}(\syz^{i+1}(C))) \text{ holds for any } C \in \Db(R) \text{ and any } j\ge k_T.
\)
\end{lemma}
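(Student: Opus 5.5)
The plan is to start from the basic syzygy triangle of Lemma \ref{lem:syzygy-triangle} for the object $C$ in degree $i$:
\[
\syz^{i+1}(C) \longrightarrow Q \longrightarrow \syz^i(C) \longrightarrow,
\]
with $Q$ a projective module (over $S$ in part (1), over $R$ in part (2)). Applying the triangle functor $\mathcal{G}$ (resp. $\mathcal{F}$) yields a triangle
\[
\mathcal{G}(\syz^{i+1}(C)) \longrightarrow \mathcal{G}(Q) \longrightarrow \mathcal{G}(\syz^i(C)) \longrightarrow .
\]
The idea is that the middle term is a bounded complex of projectives whose top degree is controlled. Then Lemma \ref{lem:syzygy-comparison} converts the triangle into an identification of syzygies of the two outer terms, with an index shift of one.

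For (1), Lemma \ref{lem:image-projective}(1), applied with $a=b=0$, gives $\mathcal{G}(Q)\in \K_{[-k_T,0]}(\PR)\subseteq \K_{(-\infty,0]}(\PR)$. Lemma \ref{lem:syzygy-comparison} with $t=0$, $L=\mathcal{G}(\syz^{i+1}(C))$ and $N=\mathcal{G}(\syz^i(C))$ then gives
\[
\syz^{j}(\mathcal{G}(\syz^{i+1}(C)))\simeq \syz^{j+1}(\mathcal{G}(\syz^{i}(C)))\quad\text{for all } j\ge 0,
\]
which is exactly the claim.

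For (2), Lemma \ref{lem:image-projective}(2) gives $\mathcal{F}(Q)\in \K_{[0,k_T]}(\PS)\subseteq\K_{(-\infty,k_T]}(\PS)$. This is consistent with Lemma \ref{lem:FR} when $Q\in\add R$. Applying Lemma \ref{lem:syzygy-comparison} with $t=k_T$ gives the isomorphism for all $j\ge k_T$, which explains the threshold in the statement.

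The only point needing care, and the main obstacle, is that Lemma \ref{lem:syzygy-comparison} is stated for triangles whose middle term lies literally in $\K_{(-\infty,t]}$. Two things have to be checked here. First, the image of $Q$ must be represented by such a complex, not just by one with homology in that range; this is why Lemma \ref{lem:image-projective} is needed rather than Lemma \ref{lem:image-bounded}. Second, the isomorphisms are only up to projective summands, in the sense of Remark \ref{rem:syzygy-well-defined}. If the statement of Lemma \ref{lem:syzygy-comparison} is in doubt, I would argue directly. Take projective resolutions of the two outer terms and realize the triangle as a degreewise split sequence of complexes via a mapping cone. Truncating at degrees $\ge t+1$, the projective middle term contributes nothing there, so the brutal truncations of $N$ and of $L[1]$ agree up to a projective summand in degree $t+1$. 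Shifting by $-(t+1)$ then yields $\syz^{t+1}(N)\simeq\syz^t(L)$. Lemma \ref{lem:syzygy-basic}(4) propagates this to all $j\ge t$.
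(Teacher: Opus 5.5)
Your proposal is correct and is essentially the paper's proof: the same syzygy triangle is pushed through $\mathcal{G}$ (resp.\ $\mathcal{F}$), the image of the projective term is placed in $\K_{[-k_T,0]}(\PR)$ (resp.\ $\K_{[0,k_T]}(\PS)$), and Lemma~\ref{lem:syzygy-comparison} is applied with $t=0$ (resp.\ $t=k_T$). The only differences are cosmetic: you cite Lemma~\ref{lem:image-projective} where the paper uses $\add T$ and Lemma~\ref{lem:FR} directly, and your fallback cone argument is not needed (the truncations in degrees $\ge t+1$ in fact agree exactly, not merely up to a projective summand).
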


\begin{proof} (1)
By Lemma \ref{lem:syzygy-triangle}, for $C\in \Db(S)$, there is a triangle:
\[
\syz^{i+1}(C) \longrightarrow P_i \longrightarrow \syz^i(C) \longrightarrow,
\]
where $P_i$ is projective. Applying $\mathcal{G}$ yields a triangle in $\Db(R)$:
\[
\mathcal{G}(\syz^{i+1}(C)) \longrightarrow \mathcal{G}(P_i) \longrightarrow \mathcal{G}(\syz^i(C)) \longrightarrow.
\]
Since $P_i$ is projective, $\mathcal{G}(P_i) \in \add T \subseteq \K_{[-k_T,0]}(\PR) \subseteq \K_{(-\infty,0]}(\PR)$. By Lemma \ref{lem:syzygy-comparison} with $t = 0$, we obtain for all $j \geq 0$:
\[
\syz^j(\mathcal{G}(\syz^{i+1}(C))) \simeq \syz^{j+1}(\mathcal{G}(\syz^i(C))).
\]

(2) The proof is dual to (1). By Lemma \ref{lem:syzygy-triangle}, for $C\in \Db(R)$, there is a triangle:
\[
\syz^{i+1}(C) \longrightarrow P_i \longrightarrow \syz^i(C) \longrightarrow,
\]
where $P_i$ is projective. Applying $\mathcal{F}$ yields a triangle in $\Db(S)$:
\[
\mathcal{F}(\syz^{i+1}(C)) \longrightarrow \mathcal{F}(P_i) \longrightarrow \mathcal{F}(\syz^i(C)) \longrightarrow.
\]
Since $P_i$ is projective, $\mathcal{F}(P_i) \in \mathcal{F}(\add R) \subseteq \K_{[0,k_T]}(\PS) \subseteq \K_{(-\infty,k_T]}(\PS)$. By Lemma \ref{lem:syzygy-comparison} with $t = k_T$, we obtain for all $j \geq k_T$:
\[
\syz^j(\mathcal{F}(\syz^{i+1}(C))) \simeq \syz^{j+1}(\mathcal{F}(\syz^i(C))).
\]
\end{proof}

\begin{corollary} \label{cor:iterative-syzygy}
$(1)$
\(
\syz^j(\mathcal{G}(\syz^i(C))) \simeq \syz^{j+i}(\mathcal{G}(C)) 
\) holds for $C \in \modd S$ and $i,j \geq 0$.

$(2)$ 
\(
\syz^j(\mathcal{F}(\syz^i(C))) \simeq \syz^{j+i}(\mathcal{F}(C))
\) holds for $C \in \modd R$, $j \geq k_T$ and $i\ge 0$.
\end{corollary}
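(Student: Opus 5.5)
We prove the corollary by induction on $i$, iterating Lemma~\ref{lem:syzygy-descent}. The base case $i=0$ is immediate, since both sides equal $\syz^j(\mathcal{G}(C))$ (respectively $\syz^j(\mathcal{F}(C))$). By Remark~\ref{rem:classical-syzygy}, for a module $C$ and $i\ge 0$, $\syz^i(C)$ is the usual syzygy module. The lemma is stated for arbitrary $C\in\Db$, so restricting to modules causes no difficulty. All isomorphisms are understood up to projective summands, as in Remark~\ref{rem:syzygy-well-defined}, and this equivalence relation is transitive.

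For (1), assume $\syz^{j}(\mathcal{G}(\syz^{i}(C)))\simeq \syz^{j+i}(\mathcal{G}(C))$ for all $j\ge 0$. Apply Lemma~\ref{lem:syzygy-descent}(1) with the index $i$ and with $j\ge 0$:
\[
\syz^{j}(\mathcal{G}(\syz^{i+1}(C)))\simeq \syz^{j+1}(\mathcal{G}(\syz^{i}(C))).
\]
Since $j+1\ge 0$, the induction hypothesis applied with $j+1$ in place of $j$ gives
\[
\syz^{j+1}(\mathcal{G}(\syz^{i}(C)))\simeq \syz^{j+1+i}(\mathcal{G}(C)).
\]
Chaining the two isomorphisms yields the statement for $i+1$ and all $j\ge 0$.

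Part (2) follows by the identical induction using Lemma~\ref{lem:syzygy-descent}(2), which requires $j\ge k_T$. The induction hypothesis is then invoked at $j+1$, and $j+1\ge k_T$ still holds, so the range of $j$ is preserved at each step.

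The only point needing care is that the induction hypothesis must be formulated for all admissible $j$ simultaneously, because the inductive step uses it at $j+1$. The constraints $j\ge 0$ and $j\ge k_T$ are both stable under $j\mapsto j+1$, so this works.

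A second point is that Lemma~\ref{lem:syzygy-comparison} yields isomorphisms only up to projective equivalence. One should check that projective equivalence is compatible with taking further syzygies, which is standard by Lemma~\ref{lem:syzygy-basic}(5),(7). With this, the chain of isomorphisms is legitimate.
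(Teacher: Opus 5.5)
Your proof is correct and follows the paper's argument. The paper just notes $C=\syz^0(C)$ and applies Lemma~\ref{lem:syzygy-descent} inductively; your induction on $i$, with the hypothesis quantified over all admissible $j$ (a range stable under $j\mapsto j+1$), is a careful write-up of exactly that. One small refinement: your closing remark is unnecessary if a single projective resolution of $C$ is fixed throughout, and if you do want independence of the choice of $\syz^i(C)$, the fact actually needed is that $\syz^j(\mathcal{G}(Q))$ is projective for projective $Q\in\modd S$ and $j\ge 0$, and $\syz^j(\mathcal{F}(Q))$ is projective for projective $Q\in\modd R$ and $j\ge k_T$ (by Lemma~\ref{lem:image-projective}), rather than Lemma~\ref{lem:syzygy-basic}(5),(7) alone.
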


\begin{proof}
Note that $C=\syz^0(C)$ and then apply the above lemma inductively. 
\end{proof}

\subsection{The key syzygy identity}

\begin{lemma} \label{lem:key-syzygy}
$(1)$ 
\(
\syz^{0}\bigl(\mathcal G(\syz^{j}(\mathcal F(X)))\bigr) \simeq \syz^{j}(X)
\) holds for any $X \in \mathcal D_{[a,b]}(R)$ and any integer $j \ge \max(b,0)$.

$(2)$ 
\(
\syz^{k_T}\bigl(\mathcal F(\syz^{j-k_T}(\mathcal G(X)))\bigr) \simeq \syz^{j}(X)
\) holds for any $X \in \mathcal D_{[a,b]}(S)$ and any integer $j \ge \max(b,k_T)$.
\end{lemma}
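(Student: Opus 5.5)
The plan is to realize $\syz^j(\mathcal F(X))$ as the middle of a triangle linking it to $\mathcal F(X)$, transport that triangle back by $\mathcal G$, and then read off syzygies with Lemma~\ref{lem:syzygy-comparison}.

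For (1), set $Y:=\mathcal F(X)$. Apply Lemma~\ref{lem:syzygy-triangle} to $Y$ with $m=0$ and $n=j-1$; for $j=0$ we use $\syz^0(Y)$ directly. This gives a triangle
\[
\syz^{j}(Y)[j-1]\longrightarrow B\longrightarrow \syz^0(Y)\longrightarrow
\]
with $B\in\K_{[0,j-1]}(\PS)$. We rotate it to
\[
B\longrightarrow \syz^0(Y)\longrightarrow \syz^j(Y)[j]\longrightarrow.
\]
We also need $\syz^0(Y)\simeq Y$, since $Y\in\D_{[a,b+k_T]}(S)$. If $a\ge 0$ this is fine. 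Otherwise we replace the index $0$ by $m=\min(a,0)$ and use $Y\simeq \syz^{m}(Y)[m]$ from Lemma~\ref{lem:syzygy-basic}(2). This gives a triangle
\[
B'\longrightarrow Y\longrightarrow \syz^j(Y)[j]\longrightarrow
\]
with $B'\in\K_{[m,j-1]}(\PS)$.

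Next we apply $\mathcal G$. By Lemma~\ref{lem:image-projective}(1), $\mathcal G(B')\in\K_{[m-k_T,\,j-1]}(\PR)$, and we obtain the triangle
\[
\mathcal G(B')\longrightarrow X\longrightarrow \mathcal G(\syz^j(Y))[j]\longrightarrow.
\]
Rotating once more gives
\[
X\longrightarrow \mathcal G(\syz^j(Y))[j]\longrightarrow \mathcal G(B')[1]\longrightarrow,
\]
and rotating backwards gives
\[
\mathcal G(\syz^j(Y))[j-1]\longrightarrow \mathcal G(B')\longrightarrow X\longrightarrow.
\]
Here the middle term lies in $\K_{(-\infty,j-1]}(\PR)$. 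Lemma~\ref{lem:syzygy-comparison} with $t=j-1$ then yields
\[
\syz^{j-1}\bigl(\mathcal G(\syz^j(Y))[j-1]\bigr)\simeq \syz^{j}(X).
\]
By Lemma~\ref{lem:syzygy-basic}(3) the left side equals $\syz^{0}(\mathcal G(\syz^j(Y)))$, which proves (1). The hypothesis $j\ge\max(b,0)$ is exactly what makes $t=j-1$ admissible here; it guarantees that the comparison lands in the range where syzygies of $X$ are honest modules when $j\ge b$. The case $j=0$, which forces $b\le 0$, can be handled separately: there $\syz^0(Y)\simeq Y$ when $a\ge 0$, and the identity reduces to $\mathcal G\mathcal F\simeq \mathrm{id}$ after taking $\syz^0$.

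Part (2) is the mirror argument with the roles of $\mathcal F$ and $\mathcal G$ swapped and the index shifted by $k_T$. Put $Y=\mathcal G(X)\in\D_{[a-k_T,b]}(R)$ and take the analogous triangle
\[
B'\longrightarrow Y\longrightarrow \syz^{j-k_T}(Y)[j-k_T]\longrightarrow
\]
with $B'\in\K_{[m,\,j-k_T-1]}(\PR)$. Applying $\mathcal F$ and using Lemma~\ref{lem:image-projective}(2) puts $\mathcal F(B')$ in $\K_{(-\infty,\,j-1]}(\PS)$. Lemma~\ref{lem:syzygy-comparison} with $t=j-1$ gives
\[
\syz^{j-1}\bigl(\mathcal F(\syz^{j-k_T}(Y))[j-k_T-1]\bigr)\simeq\syz^j(X).
\]
Lemma~\ref{lem:syzygy-basic}(3) converts the left side to $\syz^{k_T}(\mathcal F(\syz^{j-k_T}(Y)))$, which is the claim. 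The condition $j\ge k_T$ is needed so that $j-k_T\ge 0$.

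The main obstacle I expect is the bookkeeping of indices: choosing the lower index $m$ so that the truncation triangle really has $Y$, not just $\syz^0(Y)$, in the middle when the homology extends into negative degrees. The same care is needed so that the width bounds of Lemma~\ref{lem:image-projective} leave the upper degree at exactly $j-1$. I would first verify the degenerate cases $j=b$ and $j=k_T$ directly to make sure no off-by-one error occurs.
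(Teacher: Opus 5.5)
Your proof is correct, but it is organized differently from the paper's.

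The paper proceeds in two stages. It uses only the degree-$0$ truncation triangle $\mathcal F(X)\to\syz^0(\mathcal F(X))\to Y\to$ with $Y\in\K_{[c'+1,0]}(\PS)$. It applies $\mathcal G$ and then $\syz^j$ (Lemma~\ref{lem:syzygy-preserve}), and discards the projective term $\syz^j(\mathcal G(Y))$ to obtain $\syz^j(\mathcal G(\syz^0(\mathcal F(X))))\simeq\syz^j(X)$. Only afterwards does it move $\syz^j$ inside $\mathcal G$, by iterating the descent Lemma~\ref{lem:syzygy-descent}.

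You instead truncate at level $j$ from the start, so $\syz^j(\mathcal F(X))[j]$ already appears in the triangle. A single application of Lemma~\ref{lem:syzygy-comparison} with $t=j-1$, plus the shift rule of Lemma~\ref{lem:syzygy-basic}(3), then finishes. In effect you perform all $j$ descent steps at once, with Lemma~\ref{lem:image-projective} bounding the top degree of $\mathcal G(B')$. In (2), the widening by $k_T$ under $\mathcal F$ is exactly absorbed by the inner index $j-k_T$.

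Your route is shorter. It bypasses Lemma~\ref{lem:syzygy-preserve}, the splitting of a triangle with projective third term, and the descent lemma. The paper's route has the merit of reusing the descent lemma, which it needs anyway for Corollary~\ref{cor:iterative-syzygy} and Lemma~\ref{lem:transfer}.

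Two small corrections to your commentary:
\begin{enumerate}
\item Lemma~\ref{lem:syzygy-comparison} imposes no restriction on $t$. So the hypotheses $j\ge\max(b,0)$ and $j\ge\max(b,k_T)$ are not what makes your step admissible. Your argument in fact gives both identities for every integer $j$; the bounds only matter later, to guarantee that the objects involved are modules.
\item The case split through $m=\min(a,0)$ is unnecessary, and so are the separate degenerate cases ($j=0$ in (1), $j=k_T$ in (2)). You can avoid them entirely by using directly the brutal truncation sequence $0\to\sigma_{\le j-1}P\to P\to\sigma_{\ge j}P\to 0$ of a projective resolution $P$.
\end{enumerate}
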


\begin{proof}
(1) Let $P$ be a projective resolution of $\mathcal F(X)$. Choose an integer $c'$ sufficiently negative so that the truncated complex
\[
Y := (\sigma_{[c',-1]}P)[1]\in \mathcal K_{[c'+1,\,0]}(\mathcal P_S)
\]
fits into the triangle
\[
\mathcal F(X) \longrightarrow \syz^0(\mathcal F(X)) \longrightarrow Y \longrightarrow \mathcal F(X)[1].
\]
Applying $\mathcal G$ gives
\[
X \longrightarrow \mathcal G(\syz^0(\mathcal F(X))) \longrightarrow \mathcal G(Y) \longrightarrow X[1].
\]
By Lemma~\ref{lem:image-projective}, we have $\mathcal G(Y)\in \mathcal K_{[c'+1-k_T,\,0]}(\mathcal P_R)$. Hence, for $j \ge \max(b,0)\ge 0$, the $j$-th syzygy $\syz^j(\mathcal G(Y))$ is projective (in fact, it is zero up to projective summands). Also, since $j\ge b$, $\syz^j(X)$ is an $R$-module by Lemma~\ref{lem:syzygy-basic}. Applying the syzygy functor $\syz^j$ to the above triangle yields an isomorphism
\[
\syz^j\bigl(\mathcal G(\syz^0(\mathcal F(X)))\bigr) \simeq \syz^j(X)
\]
in the stable category of $R$-modules.

On the other hand, Lemma~\ref{lem:syzygy-descent} with $C=\mathcal F(X)$, $i=0$, and the above $j$ gives
\[
\syz^{0}\bigl(\mathcal G(\syz^{j}(\mathcal F(X)))\bigr)
\simeq
\syz^{j}\bigl(\mathcal G(\syz^0(\mathcal F(X)))\bigr).
\]
Combining this with the previous isomorphism yields the desired identity.

(2) The proof is dual to (1).
\end{proof}

In particular, we obtain:

\begin{corollary}\label{cor:key-syzygy}
$(1)$ If $X \in \modd R$, then
\(
\syz^{0}(\mathcal{G}(\syz^{k_T}(\mathcal{F}(X)))) \simeq \syz^{k_T}(X).
\)

$(2)$ If $X \in \modd S$, then
\(
\syz^{k_T}(\mathcal{F}(\syz^{0}(\mathcal{G}(X)))) \simeq \syz^{k_T}(X).
\)
\end{corollary}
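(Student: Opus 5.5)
The plan is to specialize Lemma~\ref{lem:key-syzygy} to modules, which amounts to choosing the homological interval and the index $j$ correctly.

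For (1), let $X\in\modd R$. Regarded as a complex concentrated in degree $0$, $X$ lies in $\D_{[0,0]}(R)$, so $a=b=0$ and $\max(b,0)=0$. Lemma~\ref{lem:key-syzygy}(1) then applies to every integer $j\ge 0$. Taking $j=k_T\ge 0$ gives
\[
\syz^{0}\bigl(\mathcal G(\syz^{k_T}(\mathcal F(X)))\bigr)\simeq \syz^{k_T}(X),
\]
which is the claim. By Remark~\ref{rem:classical-syzygy}, the right-hand side is the usual $k_T$-th syzygy module of $X$. One point is worth recording for later use. By Lemma~\ref{lem:image-bounded}(1), $\mathcal F(X)\in\D_{[0,k_T]}(S)$, so Lemma~\ref{lem:syzygy-basic}(2) shows that $\syz^{k_T}(\mathcal F(X))$ is an honest $S$-module. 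The left-hand side is therefore the zeroth syzygy of the image of a module under $\mathcal G$, which is the form needed when comparing delooping data across the equivalence.

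For (2), let $X\in\modd S$. Again $X\in\D_{[0,0]}(S)$, so $b=0$ and $\max(b,k_T)=k_T$. Lemma~\ref{lem:key-syzygy}(2) therefore applies with the minimal admissible value $j=k_T$, and then $j-k_T=0$. This gives
\[
\syz^{k_T}\bigl(\mathcal F(\syz^{0}(\mathcal G(X)))\bigr)\simeq \syz^{k_T}(X).
\]
As in (1), the right-hand side is the classical $k_T$-th syzygy module.

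No genuine obstacle remains once Lemma~\ref{lem:key-syzygy} is available. The only points that need checking are the index constraints: $j\ge\max(b,0)$ in (1) and $j\ge\max(b,k_T)$ in (2), both with $b=0$. Both are met by the choice $j=k_T$. In (2) that choice is forced: the lemma requires $j\ge k_T$, and $j=k_T$ is exactly the value that makes the inner syzygy index equal to $0$.

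The one subtle point is that every isomorphism here is only up to projective summands, as in Remark~\ref{rem:syzygy-well-defined}. This is harmless, because the corollary will be used only through statements about direct summands of syzygies.
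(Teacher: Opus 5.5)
Your proposal is correct and coincides with the paper's proof, which simply takes $a=b=0$ and $j=k_T$ in Lemma~\ref{lem:key-syzygy} for both parts. Your check of the index constraints is exactly what that specialization requires: $j\ge\max(0,0)$ in (1), and $j\ge\max(0,k_T)$ in (2), which forces $j-k_T=0$.
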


\begin{proof}
Take $a=b=0$ in Lemma \ref{lem:key-syzygy}, with $j=k_T$ for (1) and $j=k_T$ for (2).
\end{proof}

\subsection{The resolution transfer lemma}

The following lemma is the key construction for both the main theorem and the global invariant.

\begin{lemma} \label{lem:transfer}
$(1)$ Let \(N\in\modd R\) and set $
M_N:=\syz^{k_T}(\mathcal{F}(N))\in\modd S.
$
If \(M_N\) admits a \(k\)-\(\ddell\) resolution of length \(n\), then \(\syz^{k_T}(N)\) admits a \(k\)-\(\ddell\) resolution of length \(n\).

$(2)$ Let \(M\in\modd S\) and set $
N_M:=\syz^{0}(\mathcal{G}(M))\in\modd R.
$
If \(N_M\) admits a \(k\)-\(\ddell\) resolution of length \(n\), then \(\syz^{k_T}(M)\) admits a \(k\)-\(\ddell\) resolution of length \(n\).
\end{lemma}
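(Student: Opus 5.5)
The plan is to transport a $k$-$\ddell$ resolution across the equivalence by applying a \emph{syzygy-normalized} version of the functor. For (1) this is $\Phi:=\syz^0\circ\mathcal G$ restricted to $\modd S$. For (2) it is $\Psi:=\syz^{k_T}\circ\mathcal F$ restricted to $\modd R$. Both land in modules: for $\Phi$ by Lemma~\ref{lem:image-bounded}(2) together with Lemma~\ref{lem:syzygy-basic}(2), and for $\Psi$ by Lemma~\ref{lem:image-bounded}(1) together with Lemma~\ref{lem:syzygy-basic}(2). I describe (1); (2) is the same argument with $\mathcal F$, $\syz^{k_T}$, Corollary~\ref{cor:iterative-syzygy}(2) and Corollary~\ref{cor:key-syzygy}(2) in place of $\mathcal G$, $\syz^0$, Corollary~\ref{cor:iterative-syzygy}(1) and Corollary~\ref{cor:key-syzygy}(1).

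\emph{Step 1: transport the resolution.} Let
\[
0\to D_n\to\cdots\to D_0\to M_N\to 0
\]
be a $k$-$\ddell$ resolution in $\modd S$ with $\Omega^{n-i}(D_i)\oemb\Omega^{n+k}(N_i)$. Split it into short exact sequences $0\to K_{i+1}\to D_i\to K_i\to 0$, where $K_0=M_N$ and $K_n=D_n$, and regard each as a triangle in $\Db(S)$. Applying $\mathcal G$ and then Lemma~\ref{lem:syzygy-preserve} with $n=0$ gives triangles $\Phi(K_{i+1})\to\Phi(D_i)\to\Phi(K_i)\to$ in $\Db(R)$ whose three terms are $R$-modules. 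A triangle whose terms all lie in $\modd R$ comes from a short exact sequence, since the connecting morphism is an element of $\mathrm{Ext}^1$. Hence we get short exact sequences of $R$-modules, each well defined up to projective summands by Remark~\ref{rem:syzygy-well-defined}. Splicing them gives an exact sequence
\[
0\to D'_n\to\cdots\to D'_0\to \Phi(M_N)\to 0,
\]
where $D'_i\simeq\Phi(D_i)$ up to projective summands. By Corollary~\ref{cor:key-syzygy}(1), $\Phi(M_N)\simeq\syz^{k_T}(N)$ up to projective summands. We then modify the last term by adding projectives to $D'_0$ (or its neighbour) to obtain an honest resolution of $\syz^{k_T}(N)$.

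\emph{Step 2: check the degree condition.} By Lemma~\ref{lem:syzygy-basic}(4) and Corollary~\ref{cor:iterative-syzygy}(1) (with $j=0$), for each $i$
\[
\Omega^{n-i}(\Phi(D_i))\simeq \syz^{n-i}\mathcal G(D_i)\simeq \Phi(\Omega^{n-i}D_i).
\]
The functor $\Phi$ is additive by Lemma~\ref{lem:syzygy-basic}(7), so the summand relation is preserved:
\[
\Phi(\Omega^{n-i}D_i)\oemb\Phi(\Omega^{n+k}N_i)\simeq\Omega^{n+k}(\Phi(N_i)).
\]
The last isomorphism again uses Corollary~\ref{cor:iterative-syzygy}(1). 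Thus we may take $N'_i:=\Phi(N_i)\in\modd R$, and the transported sequence is a $k$-$\ddell$ resolution of the same length $n$. For (2), Corollary~\ref{cor:iterative-syzygy}(2) applies because the outer syzygy index is $k_T\ge k_T$.

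\emph{Main obstacle: projective summands.} All the identifications above hold only up to projective summands. For $i<n$ these are harmless, because $\Omega^{n-i}$ with $n-i\ge1$ kills projectives and a module is a summand of $\Omega^{n+k}(N_i)$ after dropping projectives. The delicate spots are the term $D'_n$, where $n-i=0$, and the endpoint $\Phi(M_N)$ versus $\syz^{k_T}(N)$.

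At $D'_n$ I would first try to choose representatives so that no extra projective appears in $D'_n$. Concretely, I would take $\Phi(D_n)$ with no projective summand and push any surplus projective into $D'_{n-1}$ through the inclusion $Q\to Q$, which adds a split exact piece $0\to Q\to Q\to 0$ in degrees $n,n-1$. If that fails, the fallback is to enlarge $N'_n$ so that $\Omega^{n+k}(N'_n)$ contains the required projective as a summand.

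At the endpoint, a surplus projective $Q$ on $\Phi(M_N)$ is removed by replacing $D'_0$ with $D'_0\oplus Q$ and mapping it onto $\syz^{k_T}(N)\oplus Q$, or conversely. Here I would need to verify that this does not break exactness or the degree condition in degree $0$; that degree is safe only when $n\ge1$, since then $\Omega^{n}$ kills $Q$. The case $n=0$ needs separate handling.
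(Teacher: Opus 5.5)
Your proposal is correct and is essentially the paper's proof: apply $\syz^{0}\circ\mathcal G$ (resp.\ $\syz^{k_T}\circ\mathcal F$) to the short exact pieces of the resolution to get exact sequences of modules, transport the summand conditions via Corollary~\ref{cor:iterative-syzygy} and additivity, and identify the endpoint via Corollary~\ref{cor:key-syzygy}. The projective-summand issues you flag are covered by the paper's standing convention that syzygies are only defined up to projective summands (Remark~\ref{rem:syzygy-well-defined}, Lemma~\ref{lem:syzygy-basic}(5)); since $\Omega^{n+k}(N_i)\oplus Q$ is again an $(n+k)$-th syzygy, your fallback works with $N'_i=\Phi(N_i)$ unchanged, and the term $i=n$, the endpoint, and the case $n=0$ need no separate treatment.
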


\begin{proof} (1)
It is clear that $M_N\in\modd S$ since $\mathcal{F}(N)\in\D_{[0,k_T]}(S)$ by Lemma \ref{lem:image-bounded}.
Suppose $M_N$ has a classical $k$-$\ddell$ resolution of length \(n\) in $\modd S$:
\[
0 \longrightarrow D_n \longrightarrow D_{n-1} \longrightarrow \cdots \longrightarrow D_0 \longrightarrow M_N \longrightarrow 0. \tag{$\dagger$}
\]
By Definition \ref{def:classical-ddell}, for each \(i=0,\ldots,n\), there exists \(L_i\in\modd S\) such that
\[
\Omega^{n-i}(D_i) \oemb \Omega^{n+k}(L_i). \tag{$\ddagger$}
\]
That is, there are $Y_i$'s such that $\Omega^{n-i}(D_i)\oplus Y_i\simeq \Omega^{n+k}(L_i)$ in $\modd S$. The exact sequence consists of short exact sequences in $\modd S$ 
\[
    0\to C_{i+1}\to D_i\to C_i\to 0, 
\]
where $C_n=D_n$ and $C_0=M_N$.

Applying $\mathcal{G}$ to these exact sequences gives triangles in $\mathcal{G}(\modd S)\subseteq \D_{[-k_T,0]}(R)$:
 \[
    \mathcal{G}(C_{i+1})\to \mathcal{G}(D_i)\to \mathcal{G}(C_i)\to . 
\]
 
Now applying $\syz^{0}$, we obtain triangles in $\modd R$, and hence exact sequences in $\modd R$:
 \[
    0\to \syz^{0}(\mathcal{G}(C_{i+1}))\to \syz^{0}(\mathcal{G}(D_i))\to \syz^{0}(\mathcal{G}(C_i))\to 0,
\]
which combine to give an exact sequence in $\modd R$:
\[
0 \longrightarrow \syz^{0}(\mathcal{G}(D_n)) \longrightarrow \cdots \longrightarrow \syz^{0}(\mathcal{G}(D_0)) \longrightarrow \syz^{0}(\mathcal{G}(M_N)) \longrightarrow 0. \tag{$\star$}
\]

 Moreover, from \((\ddagger)\), applying $\mathcal{G}$ gives an embedding
\[
\mathcal{G}(\Omega^{n-i}(D_i)) \oemb \mathcal{G}(\Omega^{n+k}(L_i)).
\]
By Corollary \ref{cor:iterative-syzygy} (1), we obtain
\[
\syz^{0}(\mathcal{G}(\Omega^{n-i}(D_i))) \simeq \Omega^{n-i}\bigl(\mathcal{G}(D_i)\bigr) \simeq \Omega^{n-i}\bigl(\syz^{0}(\mathcal{G}(D_i))\bigr),
\]
and similarly,
\[
\syz^{0}(\mathcal{G}(\Omega^{n+k}(L_i))) \simeq \Omega^{n+k}\bigl(\mathcal{G}(L_i)\bigr) \simeq \Omega^{n+k}\bigl(\syz^{0}(\mathcal{G}(L_i))\bigr).
\]
Combining these yields
\[
\Omega^{n-i}\bigl(\syz^{0}(\mathcal{G}(D_i))\bigr) \oemb \Omega^{n+k}\bigl(\syz^{0}(\mathcal{G}(L_i))\bigr),
\]
which is precisely the required condition for \((\star)\) to be a $k$-$\ddell$ resolution in $\modd R$ (note that all $\syz^{0}(\mathcal{G}(L_i))\in \modd R$).

Since
\[
\syz^{0}(\mathcal{G}(M_N))
= \syz^{0}(\mathcal{G}(\syz^{k_T}(\mathcal{F}(N))))
\simeq \syz^{k_T}(N)
\] 
by Corollary \ref{cor:key-syzygy} (1), we see that \((\star)\) is a $k$-$\ddell$ resolution of length $n$ of $\syz^{k_T}(N)$.

(2) The proof is dual to (1).
\end{proof}

\subsection{Proof of the main theorem}

\begin{theorem} \label{thm:main}
Assume Setup \ref{setup:derived}. 

$(1)$ If every $S$-module admits a $(k+k_T)$-$\ddell$ resolution, then every $R$-module admits a $k$-$\ddell$ resolution. In particular, $(k+k_T)$-$\ddell (S)<\infty$ implies $k$-$\ddell (R)<\infty$.

$(2)$ If every $R$-module admits a $(k+k_T)$-$\ddell$ resolution, then every $S$-module admits a $k$-$\ddell$ resolution. In particular, $(k+k_T)$-$\ddell (R)<\infty$ implies $k$-$\ddell (S)<\infty$.
\end{theorem}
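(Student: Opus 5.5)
We prove (1); (2) is the same argument with the roles of $\mathcal F$ and $\mathcal G$ exchanged, using Lemma~\ref{lem:transfer}(2) and Corollary~\ref{cor:key-syzygy}(2) in place of their counterparts (1).

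Fix $N\in\modd R$ and set $M_N:=\syz^{k_T}(\mathcal F(N))$. By Lemma~\ref{lem:image-bounded}(1), $\mathcal F(N)\in\D_{[0,k_T]}(S)$, so $M_N\in\modd S$ by Lemma~\ref{lem:syzygy-basic}(2). By hypothesis $M_N$ admits a $(k+k_T)$-$\ddell$ resolution, say of length $n$. Lemma~\ref{lem:transfer}(1) holds for any degree, since its proof never uses the specific value of $k$; applied with degree $k+k_T$, it shows that $\syz^{k_T}(N)=\Omega^{k_T}(N)$ admits a $(k+k_T)$-$\ddell$ resolution of length $n$ in $\modd R$.

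It remains to descend from the syzygy back to $N$. Apply Lemma~\ref{lem:syz-k-ddell}(2) with $n$ replaced by $k_T$ and the degree $k+k_T$. The hypothesis $k+k_T>k_T$ holds because $k\ge1$, so $N$ admits a $k$-$\ddell$ resolution of length $m'$ with $n\le m'\le n+k_T$. Since $N$ was arbitrary, every $R$-module admits a $k$-$\ddell$ resolution.

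For the ``in particular'' statement, suppose $(k+k_T)$-$\ddell(S)<\infty$. By the fact quoted from \cite[Section 3]{GuoIgusa}, $(k+k_T)$-$\ddell(M)<\infty$ for every $M\in\modd S$. By Lemma~\ref{lem:k-ddell}, this says every $S$-module admits a $(k+k_T)$-$\ddell$ resolution. The first part then gives that every $R$-module, in particular every simple $R$-module, admits a $k$-$\ddell$ resolution. Because $R$ has only finitely many simples up to isomorphism, taking the maximum of the finitely many lengths yields $k$-$\ddell(R)<\infty$.

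The step needing most care is the use of Lemma~\ref{lem:transfer}. One has to check that its argument goes through verbatim with degree $k+k_T$. One also has to confirm that the ``$\simeq$'' there, meaning isomorphism up to projective summands, does not break the existence of a resolution for $\Omega^{k_T}(N)$ itself. The first would be handled by rereading the proof. For the second, adding a projective summand $Q$ to the end term can be absorbed by replacing $D_0$ with $D_0\oplus Q$. The condition $(k+k_T)$-$\dell(D_0\oplus Q)\le n$ still holds, since $\Omega^{n}(D_0\oplus Q)\simeq\Omega^{n}(D_0)$ up to projectives and projective summands do not affect the $\oemb$ condition. Removing projective summands is handled similarly, or by working stably throughout.
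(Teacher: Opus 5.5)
Your proof is correct and is essentially the paper's argument: you transfer a $(k+k_T)$-$\ddell$ resolution of $\syz^{k_T}(\mathcal{F}(N))$ to one of $\Omega^{k_T}(N)$ via Lemma~\ref{lem:transfer}(1), then descend to $N$ via Lemma~\ref{lem:syz-k-ddell}(2) using $k+k_T>k_T$. Your extra care with the ``in particular'' clause (module-level finiteness from Guo--Igusa plus finitely many simples) and with projective summands makes explicit what the paper leaves implicit.
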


\begin{proof}
The theorem now follows from Lemmas \ref{lem:syz-k-ddell} and \ref{lem:transfer}. Indeed, if every $S$-module admits a $(k+k_T)$-$\ddell$ resolution, then $\syz^{k_T}(M)$ admits a $(k+k_T)$-$\ddell$ resolution for every $R$-module $M$ by Lemma \ref{lem:transfer} (1). It follows that $M$ admits a $k$-$\ddell$ resolution by Lemma \ref{lem:syz-k-ddell} (2). The second statement is proved similarly.
\end{proof}

Let us say that \emph{a module admits an $\infty$-$\ddell$ resolution of length $n$} if it admits a $k$-$\ddell$ resolution for every $k\ge 1$.

\begin{corollary}\label{cor:derived-invariance}
The property that every module admits a finite $\infty$-$\ddell$ resolution is invariant under derived equivalences. 
\end{corollary}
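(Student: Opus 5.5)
The corollary should follow directly from Theorem~\ref{thm:main}, applied to each degree $k$ separately. Throughout, the width $k_T$ of the tilting complex is a fixed finite integer, and $R$ and $S$ enter symmetrically, so it suffices to prove one direction. Suppose every $S$-module admits a finite $\infty$-$\ddell$ resolution, that is, a $k$-$\ddell$ resolution for every $k\ge 1$. Fix an arbitrary $k\ge 1$ and set $k':=k+k_T\ge 1$. By hypothesis every $S$-module admits a $k'$-$\ddell$ resolution. Theorem~\ref{thm:main}(1) then shows that every $R$-module admits a $k$-$\ddell$ resolution. Since $k$ was arbitrary, every $R$-module admits a $k$-$\ddell$ resolution for all $k\ge1$, which is exactly the $\infty$-$\ddell$ condition. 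The converse direction is the same argument with Theorem~\ref{thm:main}(2) in place of (1).

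The only point that needs care is what ``finite $\infty$-$\ddell$ resolution'' is taking finite. If it means that for each $k$ some finite-length $k$-$\ddell$ resolution exists, with the length allowed to depend on $k$, then the argument above is complete: Lemma~\ref{lem:transfer} and Lemma~\ref{lem:syz-k-ddell}(2) produce finite-length resolutions from finite-length ones.

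If instead one wants a length bound uniform in $k$, I would track lengths through the proof of Theorem~\ref{thm:main}. Let $M\in\modd R$. Its image $\syz^{k_T}(\mathcal F(M))$ is an $S$-module, and by hypothesis it has a $(k+k_T)$-$\ddell$ resolution of some length $n$. Lemma~\ref{lem:transfer}(1) transfers this to a $(k+k_T)$-$\ddell$ resolution of $\syz^{k_T}(M)$ of the same length $n$. Lemma~\ref{lem:syz-k-ddell}(2) then gives a $k$-$\ddell$ resolution of $M$ of length at most $n+k_T$. So any uniform bound on the $S$-side yields a uniform bound on the $R$-side, increased by at most $k_T$.

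One point deserves a check. Lemma~\ref{lem:syz-k-ddell}(2) is stated with the hypothesis $k>n$, which in our application becomes $k+k_T>k_T$. This holds because $k\ge1$.

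There is no real obstacle. The only thing to verify is that the quantifier over $k$ commutes with the degree shift: the map $k\mapsto k+k_T$ sends $\{k\ge1\}$ into itself. This is what lets the degree shift in Theorem~\ref{thm:main} disappear in the $\infty$-version.
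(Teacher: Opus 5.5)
Your proposal is correct and matches the paper's argument. The paper's proof consists of applying Theorem~\ref{thm:main} for each degree $k$, with degree $k+k_T$ on the other side, together with the symmetry of the derived equivalence, exactly as you do. Your extra bookkeeping for a length bound uniform in $k$, via Lemma~\ref{lem:transfer}(1) and Lemma~\ref{lem:syz-k-ddell}(2), is the same computation the paper carries out in the proof of Proposition~\ref{pro:gddell-finiteness}.
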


\begin{proof}
This follows by symmetry of the derived equivalence and Theorem \ref{thm:main}.
\end{proof}

An $R$-module $T$ is tilting if $T$ itself is quasi-isomorphic to a tilting complex. Two module categories $\modd R$ and $\modd S$ are tilting equivalent if there is a tilting $T\in\modd R$ such that $S\simeq \mathrm{End}_RT$. Thus, tilting equivalence is a special case of derived equivalence. So we have the following corollary directly.

\begin{corollary} \label{cor:tilting}
Let $T$ be a tilting $R$-module of projective dimension $n$ and $S = \End_R(T)$. Then 

$(1)$ $(k+n)\text{-}\ddell(R) < \infty$ implies  $k\text{-}\ddell(S) < \infty$ and $(k+n)\text{-}\ddell(S) < \infty$ implies  $k\text{-}\ddell(R) < \infty$.

$(2)$ $\infty\text{-}\ddell(R) < \infty$ if and only if  $\infty\text{-}\ddell(S) < \infty$.

\end{corollary}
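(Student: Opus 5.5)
The plan is to obtain Corollary~\ref{cor:tilting} as a direct specialization of Theorem~\ref{thm:main} and Corollary~\ref{cor:derived-invariance}. The work is in identifying the width of the tilting complex attached to a tilting module of projective dimension $n$, and in fixing the shift so that Setup~\ref{setup:derived} holds.

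\textbf{Step 1: the tilting complex.} Take a minimal projective resolution $0\to P_n\to\cdots\to P_0\to T\to 0$ of the tilting module $T$. Its deleted resolution $P_T$ is a complex in $\K_{[0,n]}(\PR)$ quasi-isomorphic to $T$. By hypothesis it is a tilting complex with $\End_{\D(R)}(P_T)\cong\End_R(T)=S$. Rickard's theorem then gives a derived equivalence $\mathcal F:\Db(R)\to\Db(S)$ with quasi-inverse $\mathcal G$ and $\mathcal G(S)\cong P_T$.

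\textbf{Step 2: normalizing the shift.} Compose $\mathcal F$ with the shift $[n]$ in $\Db(S)$, or equivalently replace the tilting complex by $P_T[-n]$. This places the tilting complex in $\K_{[-n,0]}(\PR)$, exactly as Setup~\ref{setup:derived} requires, with $k_T=n$. A shift does not change the endomorphism ring or the tilting property, so Setup~\ref{setup:derived} holds with $k_T=n$.

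\textbf{Step 3: applying the earlier results.} Part (1) is then Theorem~\ref{thm:main}(2) and (1) with $k_T=n$. One only needs that ``every module admits a $k$-$\ddell$ resolution'' is equivalent to $k\text{-}\ddell<\infty$; this is the Guo--Igusa fact recalled after Corollary~\ref{cor:compare-k-dell}. Part (2) follows from (1) by quantifying over all $k\ge1$, which is Corollary~\ref{cor:derived-invariance}. Equivalently, if $(k+n)\text{-}\ddell(R)<\infty$ for all $k$, then $k\text{-}\ddell(S)<\infty$ for all $k$, and symmetrically.

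\textbf{Expected obstacle.} The only delicate point is the width convention. The width of $P_T$ is $n$: it has $n+1$ nonzero terms, in degrees $0,\dots,n$, and this must match the convention $T\in\K_{[-k_T,0]}(\PR)$ used in Setup~\ref{setup:derived}. A second point is that ``$\infty$-$\ddell<\infty$'' should be read, as in Corollary~\ref{cor:derived-invariance}, as finiteness for every degree $k$. Beyond these checks, everything is routine.
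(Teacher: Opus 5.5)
Your proposal is correct and follows the paper's route. The paper simply notes that tilting equivalence is a special case of derived equivalence and reads off the corollary from Theorem~\ref{thm:main} and Corollary~\ref{cor:derived-invariance} with $k_T=n$; your Steps 1--2 make explicit what it leaves implicit, namely that the shifted projective resolution $P_T[-n]\in\K_{[-n,0]}(\PR)$ realizes Setup~\ref{setup:derived} with width $k_T=n$.
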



\subsection{Global derived delooping level}\label{sec:gddell} 

We now introduce the global counterpart of Guo--Igusa's derived
delooping level. Recall that Barrios--Lanzilotta--Mata \cite{BLM}
defined the global delooping level associated with Gélinas's
invariant by
\[
\Dell(R):=\sup\{\dell(M)\mid M\in\modd R\}.
\]
Replacing $\dell$ with the finer invariant $\ddell$ leads to the
following refinement.

\begin{definition}\label{def:gddell}
For an Artin algebra $R$ and an integer $k\ge 1$, the \emph{global $k$-derived delooping level} of $R$ is defined as
\[
k\text{-}\gddell(R) := \sup\{ k\text{-}\ddell(M) \mid M\in \modd R\}.
\]
\end{definition}

As usual, we set \(\gddell(R) := 1\text{-}\gddell(R)\).

\begin{lemma}\label{lem:global-ddell}
Let \(R\) be an Artin algebra, let \(k\geq 1\), and let
\(\ell=\operatorname{LL}(R)\) be the Loewy length of \(R\). If
\[
m=k\text{-}\ddell(R)<\infty,
\]
then
\[
k\text{-}\gddell(R)
\leq \ell(m+1)-1.
\]
Consequently,
\[
k\text{-}\ddell(R)<\infty
\quad\Longleftrightarrow\quad
k\text{-}\gddell(R)<\infty.
\]
\end{lemma}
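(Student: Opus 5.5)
The plan is to induct on Loewy length, building resolutions from extensions of simple modules. The key tool is a horseshoe-type lemma: if $0\to M'\to M\to M''\to 0$ is exact in $\modd R$, with $k\text{-}\ddell(M')\le a$ and $k\text{-}\ddell(M'')\le b$, then
\[
k\text{-}\ddell(M)\le \max(a+1,b).
\]
Granting this, filter an arbitrary $M\in\modd R$ by its radical series. Each semisimple layer $\operatorname{rad}^{j}M/\operatorname{rad}^{j+1}M$ has $k\text{-}\ddell\le m$, because $k$-$\ddell$ resolutions of the simple summands can be summed: $\Omega$ commutes with $\oplus$, and direct summands of $\Omega^{n+k}(N_i)$ sum to direct summands of $\Omega^{n+k}(\bigoplus N_i)$. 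Shorter resolutions are first padded to a common length $n$ by Lemma~\ref{lem:k-ddell}. Induction on $j$ along the short exact sequences $0\to \operatorname{rad}^{j+1}M\to \operatorname{rad}^{j}M\to \operatorname{rad}^{j}M/\operatorname{rad}^{j+1}M\to 0$ then produces a bound that grows by at most $m+1$ per layer. Since $M$ has at most $\ell$ layers, we obtain $k\text{-}\ddell(M)\le \ell(m+1)-1$. The precise bookkeeping in the lemma determines the constant; I will aim for the cruder estimate $a+b+1$, which summed over $\ell$ layers gives exactly $\ell(m+1)-1$.

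The converse direction of the equivalence is trivial, since simple modules are modules. Hence $k\text{-}\ddell(R)\le k\text{-}\gddell(R)$.

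The main obstacle is the extension lemma itself, which I would handle as follows. Take resolutions $0\to D'_a\to\cdots\to D'_0\to M'\to 0$ and $0\to D''_b\to\cdots\to D''_0\to M''\to 0$. Form the mapping cone, or equivalently use the pullback of $D''_0\to M''$ along $M\to M''$. This gives an exact sequence
\[
0\to D''_b\to\cdots\to D''_1\oplus D'_0\to E\to M\to 0,
\]
where $E$ is an extension of $D''_0$ by $M'$ (or by $D'_0$ after splicing). The difficulty is that $k$-$\dell$ conditions are not obviously extension-closed. To avoid this, I will instead replace $M'$ by its resolution shifted by one position: the cone of the chain map lifting $M'\to M$ through the resolution of $M''$ yields terms of the form $D'_{i-1}\oplus D''_i$ in position $i$. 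A term $D'_{i-1}$ sitting in position $i$ requires $(i+k)$-$\dell(D'_{i-1})\le n-i$. This follows from $(i-1+k)$-$\dell(D'_{i-1})\le a-(i-1)$ provided the degree shift can be absorbed, and Lemma~\ref{lem:syz-k-ddell}(1) applied with $n=1$ gives the needed degree increase at the cost of one syzygy. The length therefore increases by one, which explains the $+1$.

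The real difficulty is making the cone an honest module resolution. I would realize it via the horseshoe-type construction of Guo--Igusa, taking projective covers where necessary; replacing $D''_0$ by $D''_0\oplus P$ with $P$ projective does not harm the $\dell$ conditions. I would then check the direct-summand conditions term by term, using that $\Omega$ is additive.
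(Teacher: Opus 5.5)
Your outer argument is the paper's: filter by the radical series, bound each semisimple layer by $m$ using direct sums of resolutions of simples, iterate the additive extension inequality $k\text{-}\ddell(Y)\le k\text{-}\ddell(X)+k\text{-}\ddell(Z)+1$ over the $\ell$ layers, and note that the converse is trivial. The paper does not prove that inequality; it imports it from Guo--Igusa (Lemma 3.1). \textbf{The gap is in your own treatment of the extension lemma.}

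The bound $\max(a+1,b)$ that you first posit as the key tool is unsupported. That bound, together with the pattern $D'_{i-1}\oplus D_i$, is the standard shape for resolving the \emph{cokernel} $M''$ from resolutions of $M'$ and $M$, not the middle term. Concretely, a complex with terms $D'_{i-1}\oplus D''_i$ is the cone of a chain map $D'_\bullet\to D''_\bullet$. Such a map covers a homomorphism $M'\to M''$, and the only one the sequence provides is the zero composite $M'\to M\to M''$. Its cone therefore has homology $M''$ in degree $0$ and $M'$ in degree $1$, so it does not resolve $M$. Resolving $M$ would need the horseshoe pattern $D'_i\oplus D''_i$ and a chain map lifting the connecting morphism $M''\to M'[1]$. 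Such a map need not exist, because the $D''_i$ are not projective, and this failure is exactly why the true bound is additive. Also, Lemma~\ref{lem:syz-k-ddell}(1) raises the degree for $\Omega D$, not for $D$, so it cannot ``absorb'' the degree shift while $D'_{i-1}$ stays in position $i$. Finally, ``taking projective covers where necessary'' does not supply the missing construction.

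To close the gap, either cite Guo--Igusa's Lemma 3.1 as the paper does, or argue directly by induction on $a$, simultaneously for all degrees.
Lifting an epimorphism $P\to Z$ from a projective to $Y$ gives $0\to\Omega Z\to X\oplus P\to Y\to 0$. By Lemma~\ref{lem:syz-k-ddell}(1), $(k+1)\text{-}\ddell(\Omega Z)\le b$.

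\emph{Case $a=0$.} Here $k\text{-}\dell(X\oplus P)=0$. Splicing $X\oplus P$ in position $0$ with the resolution of $\Omega Z$ in positions $\ge 1$ gives $k\text{-}\ddell(Y)\le b+1$.

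\emph{Case $a\ge 1$.} Take the first step $0\to K'\to D'_0\to X\to 0$ of a resolution of $X$, so that $k\text{-}\dell(D'_0)\le a$ and $(k+1)\text{-}\ddell(K')\le a-1$. Then $0\to W\to D'_0\oplus P\to Y\to 0$, where $W$ is the preimage of $\Omega Z$ and fits into $0\to K'\to W\to\Omega Z\to 0$. Induction in degree $k+1$ gives $(k+1)\text{-}\ddell(W)\le a+b$, and splicing yields $k\text{-}\ddell(Y)\le a+b+1$.

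Note that it is the syzygy of the quotient, not of the submodule, that moves up a position.
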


\begin{proof}
Let \(J=\operatorname{rad}R\). Since
\(\ell=\operatorname{LL}(R)\), we have \(J^\ell=0\).
For every \(M\in\modd R\), consider its Loewy
filtration
\[
0=MJ^\ell\subseteq MJ^{\ell-1}\subseteq\cdots
\subseteq MJ\subseteq M.
\]
Each quotient $
MJ^i/MJ^{i+1}$ 
is semisimple. By the definition of
\(k\text{-}\ddell(R)\), every simple right
\(R\)-module \(S\) satisfies $k\text{-}\ddell(S)\leq m.$ 
Moreover, the \(k\)-derived delooping level of a finite direct
sum is bounded above by the maximum of the \(k\)-derived
delooping levels of its summands; this follows by taking finite
direct sums of the defining resolutions. Hence
\[
k\text{-}\ddell
   \bigl(MJ^i/MJ^{i+1}\bigr)\leq m
\]
for every \(0\leq i<\ell\). We claim that, for \(1\leq t\leq\ell\),
\[
k\text{-}\ddell(M/MJ^t)
\leq t(m+1)-1.
\]
For \(t=1\), the module \(M/MJ\) is semisimple, and therefore
$k\text{-}\ddell(M/MJ)\leq m.$ 
Suppose that the assertion holds for \(t-1\). From the short
exact sequence
\[
0\longrightarrow MJ^{t-1}/MJ^t
\longrightarrow M/MJ^t
\longrightarrow M/MJ^{t-1}
\longrightarrow0
\]
and the extension inequality \cite[Lemma~3.1]{GuoIgusa}, whose
proof applies verbatim to Artin algebras,
\[
k\text{-}\ddell(Y)
\leq
k\text{-}\ddell(X)
+
k\text{-}\ddell(Z)+1
\]
for a short exact sequence
\(0\to X\to Y\to Z\to0\), we obtain
\begin{align*}
k\text{-}\ddell(M/MJ^t)
&\leq
k\text{-}\ddell(MJ^{t-1}/MJ^t)
+
k\text{-}\ddell(M/MJ^{t-1})+1\\
&\leq
m+\bigl((t-1)(m+1)-1\bigr)+1\\
&=t(m+1)-1.
\end{align*}
This proves the claim by induction. Taking \(t=\ell\) and using \(MJ^\ell=0\), we obtain $k\text{-}\ddell(M)
\leq\ell(m+1)-1$ 
for every \(M\in\modd R\). Therefore,
\[
k\text{-}\gddell(R)
\leq\ell(m+1)-1<\infty.
\]

Conversely, since every simple right \(R\)-module belongs to
\(\modd R\), one has
\[
k\text{-}\ddell(R)
\leq k\text{-}\gddell(R).
\]
The asserted equivalence now follows.
\end{proof}

The following result captures the gap between the global derived delooping levels of derived equivalent Artin algebras. 
\begin{proposition}\label{pro:gddell-finiteness}
Assume Setup~\ref{setup:derived}, and for  $k\geq 1$.

$(1)$ If $(k+k_T)\text{-}\gddell(S)<\infty,$  then $k\text{-}\gddell(R)<\infty.$

$(2)$ If $(k+k_T)\text{-}\gddell(R)<\infty,$ then $k\text{-}\gddell(S)<\infty.$\\
More precisely, we have  
\[ k\text{-}\gddell(R) \le (k+k_T)\text{-}\gddell(S) +k_T, and \]
\[k\text{-}\gddell(S) \le (k+k_T)\text{-}\gddell(R) +k_T. \]
\end{proposition}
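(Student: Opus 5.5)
The plan is to prove the quantitative inequality $k\text{-}\gddell(R) \le (k+k_T)\text{-}\gddell(S)+k_T$. Statement (1) then follows at once, and (2) together with the second inequality follows by the symmetric argument using Lemma~\ref{lem:transfer}(2) in place of (1).

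Put $g=(k+k_T)\text{-}\gddell(S)$; if $g=\infty$ there is nothing to prove. Fix $N\in\modd R$. By Lemma~\ref{lem:image-bounded}, $M_N:=\syz^{k_T}(\mathcal F(N))$ is an $S$-module. Hence $(k+k_T)\text{-}\ddell(M_N)\le g$, and by Lemma~\ref{lem:k-ddell} $M_N$ admits a $(k+k_T)$-$\ddell$ resolution of length $n\le g$. Lemma~\ref{lem:transfer}(1), applied with degree $k+k_T$, then shows that $\syz^{k_T}(N)=\Omega^{k_T}(N)$ admits a $(k+k_T)$-$\ddell$ resolution of the same length $n$.

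Next apply Lemma~\ref{lem:syz-k-ddell}(2), taking $n$ there equal to $k_T$ and the degree equal to $k+k_T>k_T$. This gives $N$ a $k$-$\ddell$ resolution of length $m'$ with $n\le m'\le n+k_T$. Therefore $k\text{-}\ddell(N)\le g+k_T$, and taking the supremum over $N$ gives $k\text{-}\gddell(R)\le g+k_T$. The case $k_T=0$ is trivial, since then Lemma~\ref{lem:syz-k-ddell} is not needed.

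For the $S$-side, fix $M\in\modd S$. Here $N_M=\syz^0(\mathcal G(M))\in\modd R$, with $(k+k_T)\text{-}\ddell(N_M)\le (k+k_T)\text{-}\gddell(R)$. Lemma~\ref{lem:transfer}(2) transfers the resolution to $\Omega^{k_T}(M)$, and Lemma~\ref{lem:syz-k-ddell}(2) then bounds $k\text{-}\ddell(M)$ by $(k+k_T)\text{-}\gddell(R)+k_T$.

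The main point needing care is the length bookkeeping in Lemma~\ref{lem:syz-k-ddell}(2). The degree drops by exactly $k_T$ while the length grows by at most $k_T$; this is the source of the additive $+k_T$. I would double check that the inductive use of the one-step Guo--Igusa lemmas adds at most one to the length per step. Everything else is a direct combination of earlier results, so no further obstacle is expected.
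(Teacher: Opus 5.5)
Your proposal is correct and is essentially the paper's proof. You transfer a $(k+k_T)$-$\ddell$ resolution of $\syz^{k_T}(\mathcal F(N))$ to $\Omega^{k_T}(N)$ via Lemma~\ref{lem:transfer}(1), pass back to $N$ with Lemma~\ref{lem:syz-k-ddell}(2) at a cost of at most $k_T$ in length, and argue symmetrically with Lemma~\ref{lem:transfer}(2). The differences are cosmetic: you correctly record that the transferred resolution still has degree $k+k_T$ (the paper's text calls it a $k$-$\ddell$ resolution at that step), and you read off (1) and (2) directly from the inequalities rather than citing Theorem~\ref{thm:main} and Lemma~\ref{lem:global-ddell}.
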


\begin{proof}
By Lemma~\ref{lem:global-ddell}, for every Artin algebra $A$ and
every integer $k\geq 1$, one has
\[
k\text{-}\ddell(A)<\infty
\quad\Longleftrightarrow\quad
k\text{-}\gddell(A)<\infty.
\]
Thus (1) follows from Theorem~\ref{thm:main}(1), and (2) follows
from Theorem~\ref{thm:main}(2). 

Next, we prove \[k\text{-}\gddell(R) \le (k+k_T)\text{-}\gddell(S) +k_T.\tag{*}\]  The proof of $k\text{-}\gddell(S) \le (k+k_T)\text{-}\gddell(R) +k_T $ is symmetric  using Lemma~\ref{lem:transfer}(2). 

Let $M \in \modd R$ be arbitrary. Set 
\[
M_S := \syz^{k_T}(\mathcal{F}(M)) \in \modd S,
\]
which is indeed a module by Lemma~\ref{lem:image-bounded}. By definition of the global derived delooping level, we have
\[
(k+k_T)\text{-}\ddell(M_S) \le (k+k_T)\text{-}\gddell(S).
\]
Thus $M_S$ admits a $(k+k_T)$-$\ddell$ resolution of length $n \le (k+k_T)\text{-}\gddell(S)$.

Applying Lemma~\ref{lem:transfer}(1) with $N = M$ and $k$ replaced by $k+k_T$, we obtain that $\syz^{k_T}(M)$ admits a $k$-$\ddell$ resolution of length $n$. Hence
\[
k\text{-}\ddell(\syz^{k_T}(M)) \le n \le (k+k_T)\text{-}\gddell(S).
\]

By Lemma~\ref{lem:syz-k-ddell} (2) (taking $m = k_T$), we deduce that $M$ admits a $k$-$\ddell$ resolution of length at most
\[
k\text{-}\ddell(\syz^{k_T}(M)) + k_T \le (k+k_T)\text{-}\gddell(S) + k_T.
\]
Therefore
\[
k\text{-}\ddell(M) \le (k+k_T)\text{-}\gddell(S) + k_T.
\]
Taking the supremum over all $M$ yields (*). 
\end{proof}
\begin{corollary}\label{cor:gddell-finiteness}
Assume Setup~\ref{setup:derived}. Then
\[
\infty\text{-}\gddell(R)<\infty
\quad\Longleftrightarrow\quad
\infty\text{-}\gddell(S)<\infty.
\]
Whenever these equivalent conditions hold, one has
\[
\left|
\infty\text{-}\gddell(R)
-
\infty\text{-}\gddell(S)
\right|
\leq k_T.
\]
\end{corollary}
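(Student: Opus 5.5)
The plan is to read $\infty\text{-}\gddell(A)$ as $\sup_{k\ge 1} k\text{-}\gddell(A)$. This equals $\lim_{k\to\infty} k\text{-}\gddell(A)$, because $k\text{-}\ddell(M)\le (k+1)\text{-}\ddell(M)$ for every module $M$ by Corollary~\ref{cor:compare-k-dell}, so $k\text{-}\gddell(A)$ is nondecreasing in $k$. This matches the convention that an $\infty$-$\ddell$ resolution is one which works for every degree $k\ge1$. With this reading, the corollary reduces to taking suprema in the two inequalities of Proposition~\ref{pro:gddell-finiteness}.

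First, fix $k\ge1$. By Proposition~\ref{pro:gddell-finiteness},
\[
k\text{-}\gddell(R)\le (k+k_T)\text{-}\gddell(S)+k_T\le \infty\text{-}\gddell(S)+k_T,
\]
where the second inequality is monotonicity in the degree. Taking the supremum over $k$ gives
\[
\infty\text{-}\gddell(R)\le \infty\text{-}\gddell(S)+k_T.
\]
The symmetric inequality of Proposition~\ref{pro:gddell-finiteness} similarly gives
\[
\infty\text{-}\gddell(S)\le \infty\text{-}\gddell(R)+k_T.
\]

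These two inequalities give both conclusions at once. If either side is finite, the inequalities force the other side to be finite, which proves the equivalence. When both are finite, subtracting yields $|\infty\text{-}\gddell(R)-\infty\text{-}\gddell(S)|\le k_T$.

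The only delicate point is the meaning of $\infty\text{-}\gddell$. If it were instead defined through a single uniform resolution length working for all degrees $k$, one would need the length bound to be uniform in $k$. Proposition~\ref{pro:gddell-finiteness} does supply such a bound, namely $(k+k_T)\text{-}\gddell(S)+k_T\le \infty\text{-}\gddell(S)+k_T$ for every $k$, so the same argument goes through unchanged. I expect no further obstacle, since the substantive work (Lemma~\ref{lem:transfer} and Lemma~\ref{lem:syz-k-ddell}(2)) is already contained in the proposition.
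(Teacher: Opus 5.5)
Your proposal is correct and follows essentially the same route as the paper. You take the supremum over $k\ge1$ in both inequalities of Proposition~\ref{pro:gddell-finiteness}, bound $(k+k_T)\text{-}\gddell(S)$ by $\infty\text{-}\gddell(S)$, and read off both the equivalence and the bound $k_T$. Two small remarks: the bound $(k+k_T)\text{-}\gddell(S)\le\infty\text{-}\gddell(S)$ is just membership in the supremum (the paper phrases it as $\{k+k_T\mid k\ge1\}\subseteq\{j\mid j\ge1\}$), so monotonicity in the degree is not needed; and your worry about a ``uniform length'' reading is harmless, since Definition~\ref{def:ddell} already lets a shorter resolution count toward any larger length.
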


\begin{proof}
Taking the supremum over all $k\geq1$ in the first inequality of
Proposition~\ref{pro:gddell-finiteness}, and observing that
$\{k+k_T\mid k\geq1\}\subseteq\{j\mid j\geq1\}$, gives
\[
\infty\text{-}\gddell(R)
\leq
\infty\text{-}\gddell(S)+k_T
\]
and, symmetrically,
\[
\infty\text{-}\gddell(S)
\leq
\infty\text{-}\gddell(R)+k_T.
\]
The assertions follow.
\end{proof}
\section{Examples and applications}\label{sec:examples}

\subsection{Algebras with finite derived $\infty$-delooping level}

We first list some easy examples of algebras with finite   derived $\infty$-delooping level.

\begin{example}\label{exm:f-ddell}
We have $\infty$-$\ddell(\Lambda)<\infty$ if $\Lambda$ is in one of the following classes of algebras.

$(1)$ Algebras of finite global dimension.

$(2)$ Gorenstein algebras.
    
$(3)$ Algebras of finite representation type.
    
$(4)$ Syzygy-finite algebras, including monomial algebras and left or right serial algebras, etc.

$(5)$ Local algebras.

$(6)$ One-point extension algebras $\Lambda=A[M]$ with $\infty$-$\ddell(A) < \infty$.

$(7)$ Algebras with finite $\Dell$, including truncated path algebras \cite{BLM}.
\end{example}

\begin{proof}
(1) is a special case of (2).

(2) follows from the fact that every Gorenstein projective module is infinitely deloopable. Also (2) is a special case of (7) by \cite[Theorem 3.5]{BLM}.

(3) is clearly a special case of (4), and (4) is a special case of (7) by \cite[Proposition 3.9]{BLM}.

(5) is \cite[Theorem 3.5]{GuoIgusa}.
 
 (6) follows from \cite[the sentence after   Corollary 3.6]{GuoIgusa}. 

(7) follows from \cite[Proposition 3.7]{BLM}. Indeed, if $\Dell(\Lambda)\le m< \infty$, then $\Omega^m(M)$ is infinitely deloopable for any $M\in \modd \Lambda$. Then every $M$ admits a derived $\infty$-delooping resolution of length $m$. It follows that $\infty$-$\gddell(\Lambda)\le m$.
\end{proof}
\begin{remark}
  By Theorem~\ref{thm:main}, every algebra derived equivalent to one of the algebras in Example~\ref{exm:f-ddell} has finite derived $\infty$-delooping level. Lemma~\ref{lem:global-ddell}, together with Corollary~\ref{cor:gddell-finiteness}, then implies that it also has finite global derived $\infty$-delooping level.
\end{remark}
\subsection{Examples of derived equivalences} We first revisit Chen's counterexample to illustrate how our main
results may be applied even when the classical delooping invariants behave very differently under a derived equivalence.

\begin{example}
Recall from Example~\ref{ex:kershaw-rickard} that Chen constructed
derived equivalent algebras $B$ and $C$ satisfying
\[
\dell(B)=\subddell(B)=\infty,\qquad \dell(C)=\subddell(C)=0.
\]
Since Chen works with left modules whereas we work with right modules,
we pass to the opposite algebras. The algebra $B^{\op}$ is a one-point
extension of a local algebra and therefore has finite derived
$\infty$-delooping level. Moreover, $B^{\op}$ and $C^{\op}$ are derived
equivalent. It follows from Corollary~\ref{cor:derived-invariance} that
$C^{\op}$ also has finite derived $\infty$-delooping level.
\end{example}

\begin{example}
Let $\Bbbk$ be a field and, composing paths from left to right, put
\[
R=\Bbbk\left(1\xrightarrow{\alpha}2\xrightarrow{\beta}3\right) \qquad\text{and} \qquad
S=\Bbbk\left(1\xrightarrow{a}2\xrightarrow{b}3\right)
\big/\langle ab\rangle.
\]
The algebras $R$ and $S$ are derived equivalent via the classical tilting $R$-module $T=P_1\oplus S_1\oplus P_3$, with $S\cong\End_R(T)$.
A direct computation yields the following values for every $k\ge 1$:
\[
\begin{array}{c|cc}
 & R & S \\ \hline
k\text{-}\ddell & 1 & 2\\
k\text{-}\gddell & 1 & 2.
\end{array}
\]
Both invariants differ by exactly $1$ between the two derived equivalent algebras.
\end{example}

\begin{example}
Let $\Bbbk$ be a field and, composing paths from left to right, let
\[
R=\Bbbk Q/J^2,
\qquad
Q:
\begin{tikzcd}[column sep=3.6em]
1 \arrow[r,bend left=18,"\alpha"]
&2 \arrow[l,bend left=18,"\beta"]\arrow[r,"\gamma"]
&3,
\end{tikzcd}
\] 
and 
\[
S\cong
\Bbbk
\left(
\begin{tikzcd}[column sep=3.6em]
1\arrow[r,"a"]&2\arrow[r,"b"]&3\arrow[ll,bend left=35,"c"]
\end{tikzcd}
\right)
\big/\langle ca,\,abc\rangle.
\]
The algebras $R$ and $S$ are derived equivalent  via the classical tilting $R$-module $T=P_1\oplus P_2\oplus (P_2/L_3)$, with $S\cong\End_R(T)$.
A direct computation yields the following values for every $k\ge 1$:
\[
\begin{array}{c|cc}
 & R & S \\ \hline
k\text{-}\ddell & 0 & 1\\
k\text{-}\gddell & 1 & 2.
\end{array}
\]
Still both invariants differ by exactly $1$ between the two derived equivalent algebras.
\end{example}

\raggedbottom

\end{document}